\documentclass[journal,twoside,web]{ieeecolor}
\usepackage{generic}
\usepackage{cite}
\usepackage{amsmath,amssymb,amsfonts}
\usepackage{mathtools}
\usepackage{algorithmic}
\usepackage{graphicx}
\usepackage{algorithm,algorithmic}
\usepackage[hidelinks]{hyperref}
\usepackage{textcomp}
\usepackage{tikz}
	\usetikzlibrary{arrows.meta}
\def\BibTeX{{\rm B\kern-.05em{\sc i\kern-.025em b}\kern-.08em
    T\kern-.1667em\lower.7ex\hbox{E}\kern-.125emX}}
\usepackage{setspace}  % For double spacing the one column version

\usepackage{cite}  % For condensed citations
\usepackage{mathtools}

\newtheorem{thm}{Theorem}

\newtheorem{prop}{Proposition}
\newtheorem{cor}{Corollary}

\newtheorem{asmp}{Assumption}

\DeclareMathOperator{\sgn}{sgn}

\usepackage{xcolor}
\newcommand{\addedcolor}{black} % blue
\newcommand{\movedcolor}{black} % green!50!black

\makeatletter
\@ifclasswith{ieeecolor}{onecolumn}{
		\doublespacing
	}{
	}
\makeatother
\allowdisplaybreaks

\begin{document}
% title for the journal

\title{
Lyapunov Constructions for System Interconnections Arising from Adaptation\\ in Some Optimization Methods
}

% list the authors for the paper rank as first author and so on

\author{Qixu Wang$^{1,\ast}$, Patrick McNamee$^{1,\ast}$, Zahra Nili Ahmadabadi$^{1}$, and Miroslav Krsti{\'c}$^{2}$ % <-this % stops a space
\thanks{Research was sponsored by the Army Research Office under Grant
Number W911NF-24-1-0386 and the Department of the Navy, Office of Naval Research
under ONR award number N000142412269. The views, findings, conclusions, or recommendations contained in this document are those of the authors and should not be interpreted as representing the official policies or views, either expressed or implied, of the Army Research Office, the Office of Naval Research, or the U.S. Government. The U.S. Government is authorized to reproduce and distribute reprints for Government purposes notwithstanding any copyright notation herein.}
\thanks{$^{1}$Q. Wang, P. McNamee, and Z. N. Ahmadabadi are with the Department of Mechanical Engineering, San Diego State University, San Diego, CA, USA
        {\tt\small $\lbrace$qwang0429, pmcnamee5123,zniliahmadabadi$\rbrace$@sdsu.edu}}%
\thanks{$^{2}$M. Krsti{\'c} with the Department of Mechanical and Aerospace Engineering, University of California San Diego, La Jolla, CA, USA
        {\tt\small krstic@ucsd.edu}}%
\thanks{
        $^{\ast}$ These authors contributed equally as joint first authors.
    }
}

\maketitle

\begin{abstract}

% \textcolor{\addedcolor}{Adaptive gradient methods in continuous time are naturally interconnected systems: a parameter-update subsystem, driven through an adaptive gain, is coupled to an auxiliary subsystem that estimates derivative information of the cost function. Both subsystems are input-to-state stable (ISS), so global asymptotic stability (GAS) of the interconnection can be certified by small-gain arguments; the resulting ISS gains and Lyapunov functions, however, are rarely available in closed form. This work provides three constructions of explicit Lyapunov functions for the interconnected systems arising from adaptive gradient methods---a max-type construction, a construction for scalar parameters based on an error coordinate, and an integral-transform construction---each built directly from properties of the cost function and growth bounds of its derivatives rather than from subsystem dissipation rates.} 
Interconnected systems have been widely studied, with a focus on interconnected systems whose subsystems are solely input-to-state stable (ISS) or passive systems. The focus of this work is on the interconnected systems that appear in adaptive gradient methods. In adaptive gradient methods, one subsystem seeks to move parameters of a cost function towards a minimizer while the other subsystem works to estimate some derivative information about the cost function to help determine the direction of the parameter update. This work studies instances of such interconnected systems and gives various Lyapunov function \textcolor{\addedcolor}{constructions for them} using different techniques. In doing so, adaptive gradient optimizers are proven to be globally asymptotically stable (GAS), and the methods for constructing the Lyapunov functions that certify this are presented.
\end{abstract}

% \begin{IEEEkeywords}
%     Adaptive Gradient Optimization, Interconnected System, Lyapunov Function, Small Gain Theorem
% \end{IEEEkeywords}

% \tableofcontents

\section{Introduction}
    \label{sec:introduction}

\textcolor{\movedcolor}{Adaptive gradient methods are a prominent class of algorithms in optimization, particularly in neural network training \cite{ref:ma-2021,ref:yao-2021}. These methods use adaptive gains to alter the standard gradient descent optimizer, with the adaptation laws being based on the time history of either first- or second-order derivative information. These adaptation laws allow for the adaptive gradient optimizer to alter the descent rate towards the extremum of a cost function, often accelerating the descent rate in directions with small variations in the cost function and decelerating the descent rate in directions with large variations in the cost function. Popular optimizers with continuous-time dynamics are RMSprop \cite{ref:tieleman-2012} and AdaHessian \cite{ref:yao-2021}.} \textcolor{\addedcolor}{These adaptive gradient methods form an interconnected system, in which a parameter-update subsystem driven by the adaptive gain is coupled to an auxiliary subsystem that low-pass filters the derivative information required by the adaptive gain. The two subsystems are dynamically coupled---the estimate steers the parameter update, while the parameter trajectory drives the estimate---so the stability of the optimizer is a property of the interconnection rather than of either subsystem alone. Both subsystems are Input-to-State Stable (ISS) so adaptive gradient methods are ISS-ISS interconnected systems.}

\textcolor{\addedcolor}{Interconnected systems are a mature subject \cite{ref:karafyllis-2011}. When the subsystems are input-to-state stable (ISS) \cite{ref:ito-2008,ref:jiang-1996:iss-max,ref:sontag-1989}, integral ISS (iISS) \cite{ref:ito-2009,ref:ito-2010,ref:ito-2012,ref:ito-2013,ref:ito-2020}, or passive \cite{ref:jiang-1996:passivity,ref:ordonez-2013}---and in the mixed ISS/iISS case \cite{ref:ito-2006,ref:ito-2009,ref:chaillet-2014}---powerful tools are available, including small-gain theorems \cite{ref:ito-2009,ref:jiang-2004:small-gain} and integral- \cite{ref:ito-2006,ref:ito-2008,ref:ito-2009}, max- \cite{ref:ito-2012,ref:ito-2014,ref:jiang-1996:iss-max}, and sum-type \cite{ref:ito-2012,ref:ito-2013,ref:ito-2014} Lyapunov constructions. These generalized tools apply to the systems studied here and certify global asymptotic stability (GAS). However, verifying any ISS or stability conditions for a given interconnected system can be nontrivial, particularly for systems arising from adaptive gradient methods. A small-gain condition presupposes ISS gains that are seldom available in closed form and whose contractive composition can be hard to check. On the other hand, the Lyapunov functions delivered by the general constructions are frequently non-elementary, so that even writing one down---let alone using its level sets---is difficult. Instead of relying on results for general ISS-ISS interconnected systems, we give Lyapunov constructions that take advantage of structure inherent to the adaptive gradient methods for more readily expressible Lyapunov functions.}

\noindent\textbf{Main Contribution:} \textcolor{\addedcolor}{We: 1) establish that the interconnected systems arising from adaptive gradient methods are GAS; and 2) provide three constructions of Lyapunov functions for these systems. These Lyapunov constructions are built directly from properties of the cost function $J$ and growth bounds of its derivatives, rather than from subsystem dissipation rates, which simplifies the expressions of the Lyapunov functions.}

% We 1) establish that adaptive gradient methods are GAS and 2) provide three distinct constructions of Lyapunov functions for these systems.

Notation: Let $\mathbb{R}_{\geq0}=[0,\infty)$, $\vert \cdot \vert$ be the Euclidean norm, and $\mathcal{K}$, $\mathcal{K}_\infty$, and $\mathcal{KL}$ denote the usual classes of comparison functions \cite{ref:khalil-2002}. For a scalar function $\phi:\mathbb{R}^n\to\mathbb{R}$, the upper right-hand Dini derivative \cite[App~C.2]{ref:khalil-2002} along a trajectory $x(t)$ is defined by
\begin{align}
    \label{eq:upper-right-dini-derivative}
    D^+ \phi\left(x\left(t\right)\right)
    \;:=\;
    \limsup_{h\to0^+}\frac{\phi\left(x\left(t+h\right)\right)-\phi\left(x\left(t\right)\right)}{h}
\end{align}
Similarly, the lower left-hand Dini derivative is
\begin{align}
    \label{eq:lower-left-dini-derivative}
    D_- \phi\left(x\left(t\right)\right)
    \;:=\;
    \liminf_{h\to0^+}\frac{\phi\left(x\left(t\right)\right)-\phi\left(x\left(t - h\right)\right)}{h}
\end{align}
These generalized (one‐sided) derivatives allow the inclusion of nonsmooth Lyapunov functions in stability proofs \cite{ref:clarke-1990}. Note that for $\alpha\in\mathcal{K}$, $D_- \alpha(r) \geq 0$ since $\alpha$ must be a strictly increasing function. \textcolor{\addedcolor}{Additionally, for differentiable $\phi$, Dini derivatives correspond to the usual time derivative along a vector field, which we indicate by $\dot{\phi}(x) = \nabla \phi(x) \cdot f(x)$ with $x(t)$ a solution to the ordinary differential equation $\dot{x} = f(x)$.}

\section{Motivating Example}
    \label{sec:motivating-example}
    
As a motivating example, consider the following system
\begin{subequations}
\label{eq:motivating-example:full-system} 
    \begin{align}
        \label{eq:motivating-example:parameter-update-ode}
        \dot{\vartheta} &= -\frac{2\vartheta}{1+\sqrt{\vert \varphi \vert}} \\
        \label{eq:motivating-example:gradient-squared-estimate}
        \dot{\varphi} &= 4\vartheta^2 - \varphi
    \end{align}
\end{subequations}
which is a representative system of those that arise in adaptive gradient optimizers. We wish to establish the stability properties, if any, of each individual subsystem and the overall interconnected subsystem. Specifically, we want to know which, if any, are either GAS or ISS. %, the definitions for which are given in Appendix~\ref{sec:stability}.
It is easy to determine the stability properties of subsystem \eqref{eq:motivating-example:gradient-squared-estimate} by considering the function $\varphi^2$. \textcolor{\addedcolor}{Its time derivative along the trajectories of \eqref{eq:motivating-example:full-system} satisfies the bound}
% Its Lie derivative satisfies the bound
\begin{align}
    \frac{d}{dt}\varphi^2 = 8 \vartheta^2 \varphi - 2\varphi^2 \leq -\varphi^2, \quad \forall \vert \varphi \vert \geq 8\vartheta^2.
\end{align}

This establishes that subsystem \eqref{eq:motivating-example:gradient-squared-estimate} is ISS with a quadratic gain function $\gamma_2(r) = 8r^2$ by \cite[Th~4.19]{ref:khalil-2002}. The subsystem~\eqref{eq:motivating-example:parameter-update-ode} is also ISS, which is shown by considering the function $\vartheta^2$ and \textcolor{\addedcolor}{its time derivative}
    \begin{equation}
        \frac{d}{dt} \vartheta^2 = \frac{-4 \vartheta^2}{1 + \sqrt{\vert\varphi\vert}}
        \leq \frac{-4 \vartheta^2}{1 + a^{-1/2}\vert\vartheta\vert}, \quad \forall \vert\vartheta\vert \geq \sqrt{a \vert\varphi\vert} 
    \end{equation}
    where $a>0$ is an arbitrary scaling constant. This implies gain $\gamma_1(r) = \sqrt{ar}$ whose growth can be made arbitrarily slow. Additionally, both subsystems satisfy the ISS dissipative-form inequalities
    \begin{align}
    \label{eq:motivating-example:dissipative-form:theta}
        \frac{d}{dt} \vartheta^2 &\leq \frac{-4 \vartheta^2}{1 + a^{-1/2}\vert\vartheta\vert} + \frac{4 a \vert \varphi \vert}{1 + \sqrt{\vert\varphi\vert}} \\
    \label{eq:motivating-example:dissipative-form:phi}
        \frac{d}{dt} \varphi^2 &\leq -\frac{3}{2}\varphi^2 + 32 \vartheta^4
    \end{align}
    where \eqref{eq:motivating-example:dissipative-form:theta} follows from \cite[Rem~2.4]{ref:sontag-1995} and \eqref{eq:motivating-example:dissipative-form:phi} is derived using Young's inequality.

Notably, the dissipative characteristics of these two subsystems differ significantly. The $\varphi$-subsystem exhibits exponential decay uniform in the initial condition $\varphi_0$. In contrast, the $\vartheta$-subsystem exhibits a state-dependent decay rate; the dissipation weakens as the input $\varphi$ grows, maintaining exponential convergence only locally or when $\varphi$ is bounded. Despite this, the compositions of the gains ${\gamma_1\circ\gamma_2(r) = \sqrt{8a}r}$ and ${\gamma_2\circ\gamma_1(r) = 8 a r}$ are both contractive maps with $a<1/8$ so the interconnected system is GAS by nonlinear small-gain theorems \cite[Th~1]{ref:teel-1996}. \textcolor{\addedcolor}{However, beyond establishing that the system is GAS, we require an associated Lyapunov function because it supplies forward-invariant sublevel sets and $\mathcal{KL}$ estimates that are essential for subsequent analyses, such as robustness and the selection of algorithm parameters.}

\textcolor{\addedcolor}{One such Lyapunov function for \eqref{eq:motivating-example:full-system} comes from \cite{ref:jiang-1996:iss-max} and, with the calculation omitted for brevity, is
\begin{equation}
    \label{eq:motivating-example:lyapunov-v-0}
    V_0(\vartheta,\varphi) = \max\left\lbrace \varphi^2, 256\left(\vartheta^4 - \vartheta^2\varrho(\vartheta^2) +\frac{1}{3}\varrho\left(\vartheta^2\right)^2\right) \right\rbrace 
\end{equation}
where
\begin{equation}
    \label{eq:motivating-example:varrho-r}
    \varrho(r) = \begin{cases}
        \frac{1}{8}r^2 & \text{ if } r \in [0, 1] \\
        -\frac{1}{8}r^2 + \frac{1}{2}r - \frac{1}{4} & \text{ if } r \in (1,2] \\
        \frac{1}{4} & \text{ if } r > 2
    \end{cases}
\end{equation}
This Lyapunov function is complicated by $\varrho$ being a piecewise function, raising the question of whether we can find other (potentially simpler) Lyapunov functions which still prove \eqref{eq:motivating-example:full-system} is GAS.
}
% While the nonlinear small-gain theorem can be used in the general problem discussed in Section~\ref{sec:problem-statement}, we still seek to construct Lyapunov functions for the interconnected systems since Lyapunov functions also provide the forward invariant sets and are useful for other analysis, such as robustness. 
% \textcolor{\addedcolor}{Having seen that a small-gain analysis, while available, becomes difficult to carry out explicitly for adaptive gradient methods, we now construct Lyapunov functions for the interconnected system directly.} However, the choice of Lyapunov function is not obvious. A naive approach would be to choose a simple sum of squares $V_{0}(\vartheta, \varphi) = \vartheta^2 + \varphi^2$ which has a \textcolor{\addedcolor}{time derivative of}
%\begin{equation}
%    \dot{V}_{0}(\vartheta,\varphi) = \frac{-4\vartheta^2}{1 + \sqrt{\vert \varphi \vert}} - 2\varphi^2 + 8\vartheta^2 \varphi
%\end{equation}
%However, $\dot{V}_{0}(2, 1) = 22 > 0$, so $\dot{V}_{0}$ is not a negative definite function and cannot be used to establish that the interconnected system is GAS. With the failure of the naive approach, we require a more complicated Lyapunov function, one that may not be initially obvious to the reader.
We give three \textcolor{\addedcolor}{additional} Lyapunov functions:
\begin{subequations}
\label{eq:motivating-example:conclusion-of-lyapunov-function}
\begin{align}
    \label{eq:motivating-example:lyapunov-function:maximum}
    V_{1}(\vartheta, \varphi) &= \vartheta^2 + \max\left\lbrace 64\vartheta^4, \varphi^2\right\rbrace \\
    \label{eq:motivating-example:lyapunov-function:scalar}
    V_{2}(\vartheta, \varphi) &= 5\vartheta^2 + \ln\left(1 + (\varphi - 4\vartheta^2)^2\right) \\
    \label{eq:motivating-example:lyapunov-function:integrals-alternative}
    \textcolor{\addedcolor}{V_{3}(\vartheta,\varphi)} & \textcolor{\addedcolor}{=\begin{multlined}[t][5cm]
    \vartheta^2 + 2\vartheta^4 + \frac{2}{3}\vert\varphi\vert^{3/2} - \vert\varphi\vert + 2\vert\varphi\vert^{1/2} \\ -2\ln\left(\vert\varphi\vert^{1/2} + 1\right)
        \end{multlined}}
\end{align}
\end{subequations}
\textcolor{\addedcolor}{where $V_{2}$ uses the error coordinate $\varphi-4\vartheta^{2}$ inspired by~\cite{ref:mcnamee-2024:newton-automatica}, and $V_{3}$ instantiates the sum-type integral construction of~\cite{ref:ito-2006,ref:ito-2009,ref:ito-2010,ref:ito-2012,ref:ito-2013,ref:ito-2020}. In contrast to these scaling techniques, whose scalings are selected from the subsystems' dissipation rates, the scalings here are determined directly by the cost function $J$ and the growth of its derivatives, as detailed in Section~\ref{sec:constructing-lyapunov-functions:integral-transforms}.} 
\textcolor{\addedcolor}{The time derivatives} of these Lyapunov functions are bounded by
\begin{subequations}
\label{eq:motivating-example:conclusion-of-lyapunov-function-derivative}
\begin{align}
    \label{eq:motivating-example:lyapunov-function:maximum:derivative}
    \dot{V}_{1}(\vartheta,\varphi) &\leq \frac{-4\vartheta^2}{1 + \sqrt{\vert\varphi\vert}} - \begin{cases}
        \varphi^2 & \text{ if } \varphi^2 > 64 \vartheta^4 \\
        0 & \text{ otherwise}
    \end{cases} \\
    \label{eq:motivating-example:lyapunov-function:scalar:derivative}
    \dot{V}_{2}(\vartheta,\varphi) &\leq \frac{-4\vartheta^2}{1 + \sqrt{\vert\varphi\vert}} - \frac{2(\varphi - 4\vartheta^2)^2}{1 + \left(\varphi - 4\vartheta^2\right)^2} \\
    \label{eq:motivating-example:lyapunov-function:integrals:derivative}
    \textcolor{\addedcolor}{\dot{V}_{3}(\vartheta,\varphi)} & \leq \textcolor{\addedcolor}{\frac{-4\vartheta^2 - 8\vartheta^4 - \tfrac{1}{2}\vert\varphi\vert^2}{1 + \sqrt{\vert\varphi\vert}}}
\end{align}
\end{subequations}
which are all negative definite functions. Thus, $V_{1}$, $V_{2}$, and $V_{3}$ are \textcolor{\addedcolor}{valid} Lyapunov functions for proving that the interconnected system is GAS to the origin using \cite[Th~4.9]{ref:khalil-2002}. \textcolor{\addedcolor}{Justifications for the negative definiteness of $\dot{V}_{1}$, $\dot{V}_{2}$, and $\dot{V}_{3}$} are given in Section~\ref{sec:constructing-lyapunov-functions:maximum}, Section~\ref{sec:constructing-lyapunov-functions:scalar-maps}, and \textcolor{\addedcolor}{Section~\ref{sec:constructing-lyapunov-functions:integral-transforms}}, respectively. \textcolor{\addedcolor}{The corresponding level sets of these functions} are depicted in Fig.~\ref{fig:level-sets}.

\begin{figure*}[t]
    \centering
    \includegraphics[width=\textwidth]{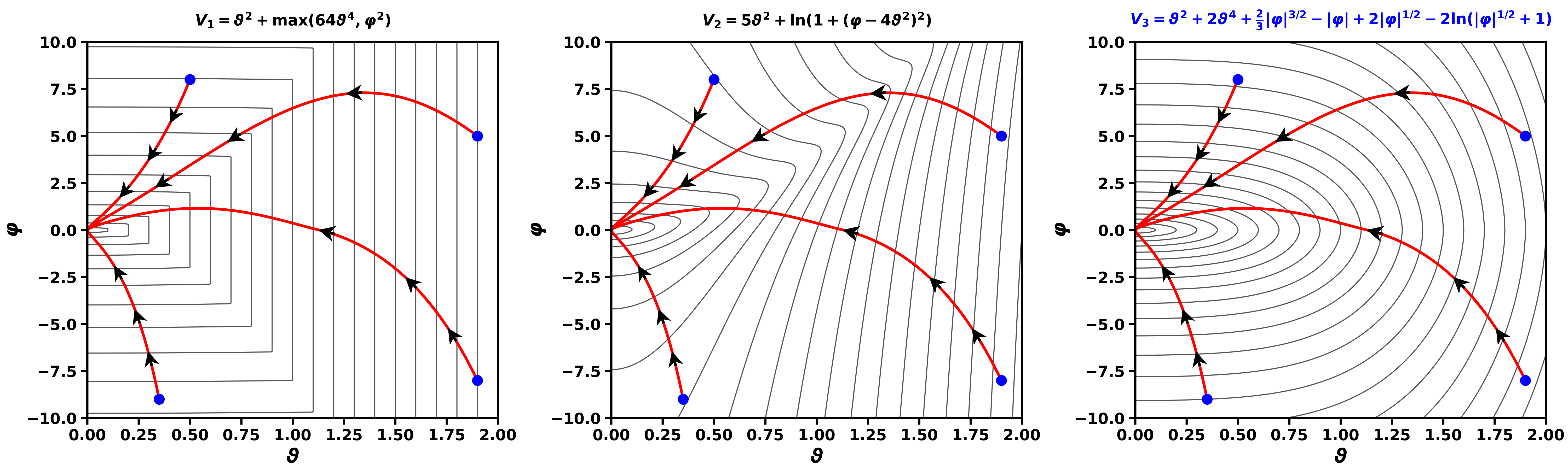}
    \caption{Level sets of the proposed Lyapunov functions $V_1, V_2, V_3$ with overlaid system trajectories. Solid \textcolor{red}{red lines} indicate trajectories initiating from various initial conditions (\textcolor{\addedcolor}{\addedcolor dots}). Due to symmetry, only the half-plane $\vartheta \in [0, 2]$ is depicted.}
    \label{fig:level-sets}
\end{figure*}

% The Lyapunov functions $V_{1}$, $V_{2}$, and $V_{3}$ are all nontrivial but the seemingly simple example interconnected system required such complicated Lyapunov functions for proving its stability. This raises the question of how to find Lyapunov functions to prove the stability of other interconnected systems arising in adaptive gradient methods if even a simple example requires complicated Lyapunov functions. To address this question, we reveal how we constructed the specific $V_{1}$, $V_{2}$, and $V_{3}$ from three different methodologies in Sections~\ref{sec:constructing-lyapunov-functions:maximum} and \ref{sec:constructing-lyapunov-functions}.
\textcolor{\addedcolor}{The additional Lyapunov functions $V_{1}$, $V_{2}$, and $V_{3}$ are still nontrivial for seemingly simple example interconnected systems, but $V_{1}$ and $V_{2}$ are certainly simpler than the $V_{0}$ from the previous literature. Importantly, since the Lyapunov function constructions are associated with properties found in adaptive gradient methods, we expect at least some of our constructions to be simpler than those found in generic interconnected system literature because they are specialized to the problem at hand. We give the specific methodologies for constructing $V_{1}$, $V_{2}$, and $V_{3}$ in Sections~\ref{sec:constructing-lyapunov-functions:maximum} and \ref{sec:constructing-lyapunov-functions}.}

\section{Problem Statement}
    \label{sec:problem-statement}

\textcolor{\addedcolor}{To specify the interconnected systems studied} in this work, we present an abstracted adaptive gradient-based system as
\begin{subequations}
\label{eq:problem-statement:full-system}
\begin{align}
    \label{eq:problem-statement:parameter-update}
    \dot{\vartheta} &= -K(\varphi) \nabla J(\vartheta)\\
    \label{eq:problem-statement:auxilliary-update}
    \dot{\varphi} &= \omega_{\ell}(q(\vartheta)-\varphi) 
\end{align}
\end{subequations}
Here, $J:\mathbb{R}^n\to\mathbb{R}$ is a cost function that depends on the state vector $\vartheta\in\mathbb{R}^n$, $\varphi\in\mathbb{R}^m$ is a low-pass filter\textcolor{\addedcolor}{ed} estimate of some properties of $J$, $K(\cdot)$ denotes a \textcolor{\addedcolor}{continuous} adaptive gain function whose output is \textcolor{\addedcolor}{a positive definite and} symmetric matrix ($K(\varphi)\succ 0$) for all $\varphi\in\mathbb{R}^m$. \textcolor{\addedcolor}{The term $q(\vartheta)$ is an adaptation law that maps $\vartheta$ to the property of $J$ estimated by the low-pass filter $\varphi$, typically formed from elements of the gradient $\frac{\partial J}{\partial \vartheta_i}(\vartheta)$ or the Hessian $\nabla^2 J(\vartheta)$. The gain $K(\varphi)$ uses $\varphi$ to determine a descent direction relative to the gradient $\nabla J$.} The cost function $J$ is assumed to satisfy the following assumptions. 

\begin{asmp}[Unique Minimum]
        \label{asmp:unique-minimum}
		For $J(\vartheta)$ where $J:\mathbb{R}^n\to\mathbb{R}$, $\exists\vartheta^{\ast}\in\mathbb{R}^n$ such that $\forall\ \vartheta\in\mathbb{R}^n$, $J(\vartheta) \geq J(\vartheta^{\ast})$ with equality if and only if $\vartheta = \vartheta^{\ast}$. Without loss of generality (wlog), let $\vartheta^{\ast} = 0$ and $J(\vartheta^{\ast}) = 0$.
\end{asmp}
\begin{asmp}[Single Critical Point]
        \label{asmp:single-critical-point}
		The cost function is continuously differentiable ($J\in\mathcal{C}^1$) and the gradient $\nabla J(\vartheta) = 0$ if and only if $\vartheta= 0$.
\end{asmp}
\begin{asmp}[Radially Unbounded $J$]
        \label{asmp:radially-unbounded}
        There exist $\alpha_{\vartheta, 1}$ and $\alpha_{\vartheta, 2}\in\mathcal{K}_{\infty}$ such that
        \begin{equation}
            \label{eq:asmp:radially-unbounded-bounds}
            \alpha_{\vartheta, 1}(\vert \vartheta \vert) \leq J(\vartheta) \leq \alpha_{\vartheta, 2}(\vert \vartheta \vert)
        \end{equation}
\end{asmp}

\begin{asmp}
    \label{asmp:q-continuous}
    The function $q:\mathbb{R}^n\to\mathbb{R}^m$ is a continuous function and, wlog, $q(0) = 0$.
\end{asmp}

Under Assumptions \ref{asmp:unique-minimum}--\ref{asmp:q-continuous}, we can analyze the properties of the system. \textcolor{\addedcolor}{The continuity of $K$, together with $J\in\mathcal{C}^1$ and continuity of $q$, ensures that the right-hand side of \eqref{eq:problem-statement:full-system} is continuous on $\mathbb{R}^n\times\mathbb{R}^m$. Furthermore,} Assumption~\ref{asmp:q-continuous} implies the existence of \textcolor{\addedcolor}{a} $\rho_{q}\in\mathcal{K}_{\infty}$ such that
\begin{equation}
    \label{eq:property-growth-rate-q}
    \vert q(\vartheta) \vert \leq \frac{1}{2}\rho_{q}(\vert \vartheta \vert)
\end{equation}

Consider the ISS-Lyapunov function $V_{4}(\varphi) = \alpha_{\varphi}(\vert\varphi\vert) = \vert \varphi \vert^2$ for the auxiliary system. The derivative of ${V}_4$ satisfies
\begin{equation}
    \label{eq:auxiliary-system-is-iss:lyapunov-derivative}
    \dot{V}_{4}(\textcolor{\addedcolor}{\vartheta,\varphi}) \leq -\omega_{\ell}\vert\varphi\vert^2 \quad \forall\ \vert\varphi\vert \geq \rho_{q}(\vert \vartheta \vert)
\end{equation}
so the auxiliary system is ISS \cite[Th~4.19]{ref:khalil-2002} with $\rho_{q}$ the system gain. For the parameter system, we examine the Lyapunov function $J$ whose \textcolor{\addedcolor}{time derivative} is
\begin{align}
\label{eq:problem-statement:j-dot-condition}
    \dot{J}(\vartheta, \varphi) &{}={} -\nabla J(\vartheta)^T\ K(\varphi)\ \nabla J(\vartheta) \notag \\ 
    &\leq -\lambda_{\min}\left(K(\varphi)\right)\ \vert \nabla J(\vartheta) \vert^2 = - W(\vartheta,\varphi)
\end{align}
where $\lambda_{\min}(\cdot)$ gives the smallest eigenvalue of $K(\varphi)$. Here, $W(\cdot,\varphi)$ represents a positive definite function for every fixed $\varphi$. Consequently, $J$ is always strictly decreasing along the trajectories of the system given in \eqref{eq:problem-statement:full-system}, provided the initial condition satisfies $\vartheta(t_0) \neq 0$. While the bound on $\dot{J}$ depends on both $\vartheta$ and $\varphi$, we define the function 
\begin{equation}
    \label{eq:w-function-def}
    \underline{W}(\vartheta) \coloneqq \min_{x\in\mathbb{R}^m \text{ and } \vert x \vert \leq \rho^{-1}(\vert \vartheta \vert)} W(\vartheta,x)
\end{equation}
with $\rho$ being \emph{any} $\mathcal{K}_{\infty}$ function to give the conditional bound
\begin{equation}
\label{eq:condition-of-dot-j-ndf}
    \dot{J}(\vartheta, \varphi) \leq - \underline{W}(\vartheta) \quad \forall \vert\vartheta\vert \geq \rho(\vert\varphi\vert).
\end{equation}
Thus, similar to the motivating example in Section~\ref{sec:motivating-example}, system \eqref{eq:problem-statement:parameter-update} is ISS with \emph{arbitrary} gain $\gamma\in\mathcal{K}_{\infty}$. This arbitrariness can be seen from \cite[Th~4.19]{ref:khalil-2002} by letting $\rho~=~\alpha_{\vartheta,2}^{-1}\circ\alpha_{\vartheta,1}\circ\gamma$ so that the system has gain $\gamma$. Letting $\gamma = \frac{1}{2}\rho_{q}^{-1}$ demonstrates that the interconnected system is GAS by \cite[Th~1]{ref:teel-1996} since $\frac{1}{2}\rho_{q}^{-1}\circ\rho_q (r) < r$ for $r > 0$. Additionally, $\underline{W}$ depends on the magnitude of $\nabla J$ and $\lambda_{\min}(K(\cdot))>0$. Since $\nabla J$ is only restricted by Assumption~\ref{asmp:single-critical-point}, $\lim_{r\to\infty} \min_{\vert \vartheta \vert = r} \underline{W}(\vartheta)$ can be $0$, with $\dot{J}$ potentially not having a dissipative inequality bound.

\section{Baseline Result: A Lyapunov Function with a Maximum}
    \label{sec:constructing-lyapunov-functions:maximum}

The strict decrease of the cost function and the ISS property of the auxiliary system provide the insight for constructing the Lyapunov function which proves the following baseline theoretical result of this work. \textcolor{\addedcolor}{While this baseline stability result can also be obtained from small-gain theorems, our aim is not just the stability conclusion itself but also any Lyapunov function that the analysis produces. Therefore, we proceed to prove the result with a Lyapunov analysis.}
% While it is possible to prove this baseline result with small-gain theorems, we nevertheless proceed with a Lyapunov analysis for the advantages it offers.
\begin{thm}
    \label{thm:adaptive-gradient-methods-are-gas}
    Consider the system described by \eqref{eq:problem-statement:full-system}. If Assumptions \ref{asmp:unique-minimum}-\ref{asmp:q-continuous} hold, then the system \eqref{eq:problem-statement:full-system} is GAS at the origin.
\end{thm}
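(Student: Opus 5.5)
We will prove the result by generalizing the max-type function $V_1$ from the motivating example. The candidate is
\begin{equation*}
V(\vartheta,\varphi) = J(\vartheta) + \max\left\lbrace \alpha(J(\vartheta)),\ \vert\varphi\vert^2 \right\rbrace ,
\end{equation*}
where $\alpha\in\mathcal{K}_\infty$ is chosen so that the region $\vert\varphi\vert^2 \ge \alpha(J(\vartheta))$ lies inside the region where \eqref{eq:auxiliary-system-is-iss:lyapunov-derivative} holds. This requires $\vert\varphi\vert \geq \rho_q(\vert\vartheta\vert)$ there. Using Assumption~\ref{asmp:radially-unbounded}, we have $\vert\vartheta\vert \le \alpha_{\vartheta,1}^{-1}(J(\vartheta))$, so the choice
\begin{equation*}
\alpha(s) = \bigl(\rho_q\circ\alpha_{\vartheta,1}^{-1}(s)\bigr)^2
\end{equation*}
suffices. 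Positive definiteness and radial unboundedness of $V$ then follow from \eqref{eq:asmp:radially-unbounded-bounds}: $V$ is bounded below by $\alpha_{\vartheta,1}(\vert\vartheta\vert)+\vert\varphi\vert^2$ and bounded above by a $\mathcal{K}_\infty$ function of $\vert(\vartheta,\varphi)\vert$. $V$ is continuous but not differentiable, so we will work with the upper right Dini derivative $D^+V$ and invoke the nonsmooth Lyapunov theorem (or the max-rule for Dini derivatives of a maximum of $\mathcal{C}^1$ functions).

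The decrease estimate splits into three cases along a trajectory.
\begin{itemize}
\item[(i)] $\vert\varphi\vert^2 > \alpha(J)$. The max is locally $\vert\varphi\vert^2$, so $D^+V \le -W(\vartheta,\varphi) - \omega_\ell\vert\varphi\vert^2$ by \eqref{eq:problem-statement:j-dot-condition} and \eqref{eq:auxiliary-system-is-iss:lyapunov-derivative}. This is negative whenever $(\vartheta,\varphi)\neq 0$.
\item[(ii)] $\vert\varphi\vert^2 < \alpha(J)$. The max is locally $\alpha(J)$. Since $\alpha$ is increasing and $\dot J\le 0$, $D^+V \le -(1+\alpha'(J))W \le -W(\vartheta,\varphi) <0$ for $\vartheta\neq0$. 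Note that $\vartheta\neq 0$ here, because $\alpha(J)>\vert\varphi\vert^2\ge0$. If $\alpha$ is not differentiable, we only use $D^+\alpha(J(\vartheta(t)))\le 0$, which holds by monotonicity, consistent with the remark on Dini derivatives of $\mathcal{K}$ functions.
\item[(iii)] The tie $\vert\varphi\vert^2=\alpha(J)$. Here $D^+$ of the max is at most the max of the two one-sided rates, so $D^+V \le -W(\vartheta,\varphi) + \max\{0,\ \tfrac{d}{dt}\vert\varphi\vert^2\}$. On the tie set $\vert\varphi\vert\ge\rho_q(\vert\vartheta\vert)$, so $\tfrac{d}{dt}\vert\varphi\vert^2\le -\omega_\ell\vert\varphi\vert^2\le 0$. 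Hence $D^+V\le -W(\vartheta,\varphi)$, which is negative when $\vartheta\ne0$; and $\vartheta=0$ on the tie forces $\varphi=0$.
\end{itemize}

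Collecting the cases gives $D^+V \le -U(\vartheta,\varphi)$. Here $U$ equals $W(\vartheta,\varphi)$, plus $\omega_\ell\vert\varphi\vert^2$ on the open set of case (i). $U$ is positive away from the origin but not continuous. The main obstacle is therefore that $U$ is not obviously bounded below by a continuous positive definite function, and $W$ may degrade as $\vert\varphi\vert\to\infty$. To circumvent this we argue with a LaSalle/trajectory argument rather than requiring a uniform comparison function.

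The first ingredient is that sublevel sets of $V$ are compact and forward invariant, since $V$ is nonincreasing along solutions. This gives stability and boundedness of solutions, so solutions exist globally by continuity of the vector field.

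The second ingredient concerns a compact sublevel set. On it, $\lambda_{\min}(K(\varphi))$ is bounded below by a positive constant by continuity of $K$, so $W\ge c\vert\nabla J(\vartheta)\vert^2$ there. The strict decrease of $V$ then implies that $V$ converges to a limit $V_\infty$. The $\omega$-limit set is nonempty, compact, invariant, and contained in $\{V=V_\infty\}$. An invariant set on which $V$ is constant must have $W\equiv0$, hence $\vartheta\equiv0$ by Assumption~\ref{asmp:single-critical-point}. Invariance then forces $\dot\varphi=-\omega_\ell\varphi$ to keep $V$ constant, so $\varphi\equiv0$.

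Together, stability and attractivity yield GAS. If a strict Lyapunov statement is preferred, we would instead lower-bound $U$ on each annulus by its infimum, which is positive by compactness, and build a continuous positive definite minorant.
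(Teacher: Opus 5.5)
Your proposal is correct, and its core coincides with the paper's proof. Your $\alpha(s)=(\rho_q\circ\alpha_{\vartheta,1}^{-1}(s))^2$ is exactly the paper's $\alpha_{\vartheta,\varphi}=\alpha_\varphi\circ\rho_q\circ\alpha_{\vartheta,1}^{-1}$, so your $V$ is $V_5$. Your three cases reproduce the paper's Cases~1--3 and end in $D^+V\le -U$, where your $U$ is the paper's $W_1$.

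The only real difference is how you extract attractivity. The paper stays with the Lyapunov inequality. It notes that $W_1$ is lower semicontinuous, because the case-(i) region is open, so $W_1$ attains a positive minimum $\mu$ on each compact annulus. Integrating $D^+V_5\le-\mu$ along any solution then gives an explicit, uniform settling time $T(c_1,c_2)$.

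You instead run a LaSalle argument on $\omega$-limit sets. This needs only pointwise positivity of $U$: no semicontinuity and no annulus minimization. Your uniform bound $\lambda_{\min}(K)\ge c$ on the sublevel set is also unnecessary, since $\lambda_{\min}(K(\varphi))>0$ pointwise already gives $W=0$ only at $\vartheta=0$. In exchange, you get convergence with no time estimate, and you rely on invariance of the $\omega$-limit set.

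Be careful with that invariance. The right-hand side of \eqref{eq:problem-statement:full-system} is only continuous, so solutions need not be unique. Invariance of the $\omega$-limit set therefore holds only in the weak sense: through each limit point passes \emph{some} solution that stays in the set, obtained as a limit of time-shifted solutions via Arzel\`a--Ascoli. That is enough for your argument, since $V\equiv V_\infty$ along such a solution forces $U=0$ at its initial point. It should be stated explicitly. The paper's integration argument avoids this issue because it applies directly to every solution.

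Finally, your closing remark about taking the infimum of $U$ over annuli needs the lower semicontinuity of $U$, not just compactness of the annulus.
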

\begin{proof}
The \textcolor{\addedcolor}{proof} is accomplished by the construction of a Lyapunov function which relates the gain of \eqref{eq:auxiliary-system-is-iss:lyapunov-derivative} to the value of $J(\vartheta)$ rather than $\vert\vartheta\vert$. As $J$ is strictly decreasing along the trajectories of the interconnected system, we naturally expect the argument in the ISS gain for \eqref{eq:problem-statement:auxilliary-update} to go to zero. It is this realization that motivates the following Lyapunov construction for proving Theorem~\ref{thm:adaptive-gradient-methods-are-gas}.

First, we relax the condition in \eqref{eq:auxiliary-system-is-iss:lyapunov-derivative} using the \textcolor{\addedcolor}{lower bound assumed in Assumption~\ref{asmp:radially-unbounded}} so that
\begin{equation}
    \label{eq:auxiliary-system:lyapunov-derivative:alternate}
    \dot{V}_{4}(\textcolor{\addedcolor}{\vartheta,\varphi}) \leq -\textcolor{\addedcolor}{\omega_{\ell}}\vert\varphi\vert^2 \quad \forall\ \vert \varphi \vert  \geq \rho_{q}\circ\alpha_{\vartheta,1}^{-1}\circ J(\textcolor{\addedcolor}{\vartheta})\geq 0  % Typo pointed out by Gokay. Was \varphi, should have been \vartheta
\end{equation} 
We further relax the condition to get 
\begin{equation}
    \label{eq:auxiliary-system:lyapunov-derivative:alternate-2}
    \dot{V}_{4}(\textcolor{\addedcolor}{\vartheta,\varphi}) \leq -\textcolor{\addedcolor}{\omega_{\ell}}\vert\varphi\vert^2\quad \forall\varphi\text{ s.t. } \vert\varphi\vert^2  \geq \alpha_{\vartheta, \varphi}\circ J (\vartheta)\geq 0
\end{equation} 
where $\alpha_{\vartheta,\varphi} = \alpha_{\varphi}\circ \rho_q \circ \alpha_{\vartheta,1}^{-1}$. The function $\alpha_{\vartheta,\varphi}\in\mathcal{K}_{\textcolor{\addedcolor}{\infty}}$ since it is a composition of class $\mathcal{K}_{\textcolor{\addedcolor}{\infty}}$ functions. This function is used to determine a lower bound, using the cost function $J(\vartheta)$, for which $\vert\varphi\vert^2$ decreases when $\vert\varphi\vert^2$ exceeds the lower bound. Additionally, note that the bound $\alpha_{\vartheta, \varphi}\circ J(\vartheta)$ is always decreasing along the trajectories, so the upper right-handed Dini derivative along the trajectories is
\begin{equation}
    \label{eq:d-plus-with-parenthesize}
    D^{+}\textcolor{\addedcolor}{\bigl(}\alpha_{\vartheta, \varphi}\circ J(\vartheta)\textcolor{\addedcolor}{\bigr)} = -W(\vartheta, \varphi) \cdot \textcolor{\addedcolor}{(}D_{-}\alpha_{\vartheta, \varphi}\textcolor{\addedcolor}{)}\bigl(J(\vartheta)\bigr) \leq 0 
    \end{equation}
% \begin{equation}
%     D^+ \alpha_{\vartheta, \varphi}\circ J(\vartheta) = -W(\vartheta, \varphi) \cdot D_{-}\alpha_{\vartheta, \varphi}(J(\vartheta)) \leq 0
% \end{equation}

To utilize the bound, we construct the candidate Lyapunov function for the overall system in \textcolor{\addedcolor}{\eqref{eq:problem-statement:full-system}}
% \begin{align}
%     \label{eq:lyapunov-function:maximum}
%     V_{5}(\vartheta, \varphi) &{}={} J(\vartheta) + \max\left\{\alpha_{\vartheta,\varphi}\circ J\left(\vartheta\right),\, \vert\varphi\vert^2\right\}\notag \\
%     &{}={} J(\vartheta) + \alpha_{\vartheta,\varphi}\circ J\left(\vartheta\right) \notag \\ &\quad+ \max\left\lbrace \vert\varphi\vert^2 - \alpha_{\vartheta,\varphi}\circ J\left(\vartheta\right),\ 0\right\rbrace
% \end{align}
\begin{align}
    \label{eq:lyapunov-function:maximum}
    V_{5}(\vartheta, \varphi) &{}={} J(\vartheta) + \max\left\{\alpha_{\vartheta,\varphi}\circ J\left(\vartheta\right),\, \vert\varphi\vert^2\right\}
\end{align}
This candidate Lyapunov function is meant to penalize three things:
\begin{enumerate}
    \item the excess ``value'' in the cost function $J(\vartheta)$;
    \item the magnitude of the lower bound of $\vert\varphi\vert^2$ given by ${\alpha_{\vartheta,\varphi}\circ J(\vartheta)}$;
    \item and the amount that $\vert\varphi\vert^2$ exceeds the bound.
\end{enumerate}
While the $\max$ function is an unusual addition to the Lyapunov function, there is precedent for using max-type Lyapunov functions in the context of interconnected systems \cite{ref:ito-2012,ref:jiang-1996:iss-max,ref:ito-2014}. \textcolor{\addedcolor}{However, unlike previous literature such as \cite{ref:jiang-1996:iss-max}, the max does not switch between two subsystem Lyapunov functions but rather incorporates $\vert\varphi\vert^2$ when the condition in \eqref{eq:auxiliary-system:lyapunov-derivative:alternate-2} is met. Note that $V_5$} is radially unbounded as
\begin{equation}
    \label{eq:lyapunov-function:maximum:lower-bound}
    V_{5}(\vartheta, \varphi) \geq \alpha_{\vartheta,1}(\vert \vartheta \vert) + \vert \varphi \vert^2
\end{equation}
since $\max\lbrace a, b \rbrace \geq a$ and $\max\lbrace a, b \rbrace \geq b$ for any $a,b\in\mathbb{R}$. Additionally, we form the upper bound
\begin{equation}
    \label{eq:lyapunov-function:maximum:upper-bound}
    V_{5}(\vartheta, \varphi) \leq \alpha_{\vartheta,2}(\vert \vartheta \vert) + \alpha_{\vartheta,\varphi}\circ\alpha_{\vartheta,2}(\vert \vartheta \vert) + \vert \varphi \vert^2
\end{equation}
as $\max\lbrace a, b \rbrace \leq a + b$ if $a,b\geq 0$. \textcolor{\addedcolor}{The radial unboundedness of $V_5$ together with the existence of an upper bound implies the existence of $\underline{\alpha}_{5},\overline{\alpha}_{5}\in\mathcal{K}_\infty$ such that
\begin{equation}
    \label{eq:lyapunov-function:maximum-minimum-bound}
    \underline{\alpha}_{5}(\vert(\vartheta,\varphi)\vert) \leq V_5(\vartheta,\varphi) \leq \overline{\alpha}_{5}(\vert(\vartheta,\varphi)\vert)
\end{equation}
by \cite[Lem~4.3]{ref:khalil-2002}.} Lastly, since the $\max$ function is nonsmooth at the switching surface
\begin{align}
    \label{eq:dini-switching-surface}
    \mathcal{S} = \left\lbrace (\vartheta, \varphi)\ \vert\ \vert\varphi\vert^2 = \alpha_{\vartheta,\varphi} \circ J\left(\vartheta \right)\right\rbrace
\end{align}
we do not expect the time derivative of $V_{5}$ to be continuous and instead have to rely on the upper right Dini derivative of $V_{5}$ along trajectories of the system \eqref{eq:problem-statement:full-system}. To this end, we must consider three cases: when $(\vartheta, \varphi)$ is ``above'' the switching surface with $ \vert\varphi\vert^2 > \alpha_{\vartheta,\varphi}\circ J\left(\vartheta \right)$; when $(\vartheta, \varphi)$ is ``below'' the switching surface with $ \vert\varphi\vert^2 < \alpha_{\vartheta,\varphi}\circ J\left(\vartheta \right)$; and when $(\vartheta, \varphi)$ is on the switching surface with $ \vert\varphi\vert^2~=~\alpha_{\vartheta,\varphi}\circ J \left(\vartheta\right)$. 

\textbf{Case 1: Above the switching surface.} In this region, $\varphi\neq 0$ and the Lyapunov function is
\begin{align}
    \label{eq:lyapunov-function:maximum:above-surface}
    V_{5}(\vartheta, \varphi) = J(\vartheta) + \vert\varphi\vert^2
\end{align}
Since $J(\vartheta)$ is continuously differentiable by Assumption \ref{asmp:single-critical-point}, \textcolor{\addedcolor}{the Dini derivative coincides with the ordinary time derivative along the trajectories of \eqref{eq:problem-statement:full-system}} % the Dini derivative is the normal Lie derivative, which is
\begin{align}
    \label{eq:lyapunov-function:maximum:above-surface:derivative}
    D^+V_{5}(\vartheta, \varphi) &= \dot{J}(\vartheta, \varphi) + \frac{d}{dt}\vert\varphi\vert^2 \notag \\
    &\leq - W(\vartheta, \varphi) - \omega_{\ell}\vert\varphi\vert^2
\end{align}
which is a negative definite function by virtue of being the sum of two negative definite functions whose arguments are different variables.

\textbf{Case 2: Below the switching surface.} When the trajectories are only in this region, which is only when $\vartheta \neq 0$, the Lyapunov function is 
\begin{align}
    \label{eq:lyapunov-function:maximum:below-surface}
    V_{5}(\vartheta, \varphi) = J(\vartheta) + \alpha_{\vartheta,\varphi}\circ J\left(\vartheta\right)
\end{align}
While $J(\vartheta)$ is still differentiable, we do not know if the composite function $\alpha_{\vartheta,\varphi}$ is differentiable, so one must use Dini derivatives when analyzing $V_5$. \textcolor{\addedcolor}{Utilizing the upper bound for $\dot{J}$ from \eqref{eq:problem-statement:j-dot-condition}, in addition to the differentiation of $\alpha_{\vartheta,\varphi}\circ J(\vartheta)$ along trajectories, the upper right-hand Dini derivative of $V_5$ in this region is}
\begin{align}
    D^+V_{5}(\vartheta, \varphi) &{}={} \dot{J}(\vartheta, \varphi) + D^+\left(\alpha_{\vartheta,\varphi}\circ J\left(\vartheta\right)\right) \notag\\
    \quad &{}\leq{} -W(\vartheta, \varphi)\cdot\left(1 + (D_{-}\alpha_{\vartheta,\varphi})\circ J(\vartheta)\right) \notag \\
    \label{eq:lyapunov-function:maximum:below-surface:derivative}
    \quad &{}\leq{} -W(\vartheta, \varphi)
\end{align}
\textcolor{\addedcolor}{which is negative since $W(\vartheta,\varphi)>0$ whenever $\vartheta\neq 0$.}
% which is a strictly negative function since $\vartheta\neq 0$.

\textbf{Case 3: On the switching surface.} Whenever the trajectories are on the switching surface, the Dini derivative $D^+V_{5}$ satisfies
\begin{multline}
    \label{eq:lyapunov-function:maximum:on-surface}
    D^+V_{5}(\vartheta, \varphi) = \dot{J}(\vartheta, \varphi) + \max\left\lbrace \frac{d}{dt}\vert\varphi\vert^2, \right. \\  D^+ \textcolor{\addedcolor}{(\alpha_{\vartheta,\varphi}\circ J (\vartheta) )}\Big\rbrace
\end{multline}
\textcolor{\addedcolor}{which is the maximum of the derivatives obtained in the two previously discussed cases.} Thus,
\begin{align}
    D^+V_{5}(\vartheta, \varphi) &{}\leq{} \begin{multlined}[t][5cm] -W(\vartheta, \varphi) -\min\left\lbrace \textcolor{\addedcolor}{\omega_{\ell}}\vert\varphi\vert^2, \right. \\ \left. W(\vartheta, \varphi)\cdot \textcolor{\addedcolor}{(D_{-}\alpha_{\vartheta,\varphi})}\circ J(\vartheta)\right\rbrace \end{multlined} \notag \\
    \label{eq:lyapunov-function:maximum:on-surface:derivative}
    &\leq -W(\vartheta, \varphi)
\end{align}
The derivative above is zero only if the function $W(\vartheta, \varphi)~=~0$, which occurs only if $\vartheta=0$. If $\vartheta=0$ then $\varphi(t)$ satisfies $\vert\varphi\vert^2~=~\alpha_{\vartheta,\varphi}(0)~=~0$ to be on the switching surface \textcolor{\addedcolor}{\eqref{eq:dini-switching-surface}}, which is only possible if $\varphi=0$. Thus, $D^+V_{5}(\vartheta, \varphi)$ is a negative definite function in this case.

\textbf{Combining the cases:} \textcolor{\addedcolor}{In all three cases, the upper-right Dini derivative of $V_{5}$ along the trajectories of \eqref{eq:problem-statement:full-system} satisfies $D^{+}V_{5}(\vartheta,\varphi)\;\leq\;-W_{1}(\vartheta,\varphi)$ where
\begin{equation}
    \label{eq:dini-proof:w-1-def}
    W_{1}(\vartheta,\varphi)\coloneqq W(\vartheta,\varphi) + \begin{cases}
        \omega_{\ell} \vert \varphi \vert^2 & \text{ if } \vert\varphi\vert^2 > \alpha_{\vartheta,\varphi}\circ J(\vartheta) \\
        0 & \text{ otherwise}
    \end{cases}
\end{equation}
While $W_{1}$ is a positive definite function, it is not continuous but rather lower semicontinuous. However, this is enough to prove that the system is GAS. Since $D^+ V_5 \leq 0$, $V_5$ must be nonincreasing along the trajectories by the Comparison Lemma \cite[Lem~3.4]{ref:khalil-2002}. Hence, the system is both stable and bounded as
\begin{equation}
    \label{eq:dini-proof:stability-and-boundedness}
    \vert (\vartheta(t_0),\varphi(t_0)) \vert \leq \delta \implies \vert (\vartheta(t),\varphi(t)) \vert \leq \underline{\alpha}_5^{-1}\circ\overline{\alpha}_5 (\delta)
\end{equation}
for all $t\in[t_0,\infty)$. Additionally, one can show via a proof by contradiction, following the steps in the proof of \cite[Th~8.2]{ref:verhulst-1996}, that the system is asymptotically stable since
\begin{multline}
    \label{eq:dini-proof:uniformly-asymptotically-stable}
    \vert (\vartheta(t_0),\varphi(t_0)) \vert \leq c_1 \implies \\ \vert (\vartheta(t),\varphi(t)) \vert \leq c_2 \ \forall t\in[t_0 + T(c_1,c_2),\infty)
\end{multline}
for all $c_1,c_2>0$, with
\begin{equation}
    \label{eq:dini-proof:settling-time}
    T(c_1,c_2) = \max\left\lbrace \frac{\overline{\alpha}_5(c_{\max}) - \underline{\alpha}_5 (c_{\min})}{\mu(c_{\min},c_{\max})} , 0\right\rbrace \,,
\end{equation}
\begin{equation}
    \label{eq:dini-proof:decay-rate}
    \mu(c_{\min},c_{\max}) = \min_{\begin{matrix}
            (\vartheta,\varphi)\in\mathbb{R}^{n+m} \\
            \text{s.t. } r_1(c_{\min}) \leq \vert(\vartheta,\varphi)\vert \\
            \phantom{s.t. } \vert(\vartheta,\varphi)\vert \leq r_2(c_{\max})
        \end{matrix}} W_1(\vartheta,\varphi) \,,
\end{equation}
$r_1(c) = \overline{\alpha}_5^{-1}\circ\underline{\alpha}_5(c)$, $r_2(c) = \underline{\alpha}_5^{-1}\circ\overline{\alpha}_5(c)$, $c_{\max} = \max\lbrace c_1,c_2\rbrace$, and $c_{\min} = \min\lbrace c_1, c_2\rbrace$. Note that the solution $\mu$ to the minimization problem of $W_1$ is well-defined since lower semicontinuous functions always attain their infimum over closed compact sets and $\mu > 0$ as $W_1$ is a positive definite function and $r_1, r_2\in\mathcal{K}_{\infty}$. This allows us to conclude that $D^+ V_5 \leq -\mu$ within the annulus with an inner radius $r_1$, an outer radius $r_2$, and contains the level set boundaries $V_5 (\vartheta,\varphi) \leq \underline{\alpha}_5(c_{\min})$ and $V_5 (\vartheta,\varphi) \leq \overline{\alpha}_5(c_{\max})$. Therefore, we conclude that the system is GAS by \cite[Def~4.4]{ref:khalil-2002}.}
\end{proof}
From the proof, we now see that $V_{1}$ in Section \ref{sec:motivating-example} is a specific instance of $V_{5}$ given the $\mathcal{K}$ functions of $\alpha_{\vartheta,1}(r)~=~\alpha_{\vartheta,2}(r)~=~r^2$ and $\rho_{q}(r) = 8r^2$.

\section{Additional Lyapunov Function Constructions}
    \label{sec:constructing-lyapunov-functions}

The proof of Theorem~\ref{thm:adaptive-gradient-methods-are-gas} is somewhat complicated since the constructed Lyapunov function relies on the use of Dini derivatives. This leaves us wondering if perhaps we have made the proof too complicated and pondering if there was another, more elegant way. In Sections \ref{sec:constructing-lyapunov-functions:scalar-maps} and \ref{sec:constructing-lyapunov-functions:integral-transforms}, we give two additional ways of constructing Lyapunov functions to prove the baseline results of this work if there is some additional structure to the interconnected system.

\subsection{Lyapunov Function for \textcolor{\addedcolor}{Scalar Parameters}}
    \label{sec:constructing-lyapunov-functions:scalar-maps}

\textcolor{\addedcolor}{This section demonstrates a different Lyapunov function constructed when both the parameter $\vartheta$ and the auxiliary variable $\varphi$ are scalars (i.e., $n=m=1$).} The approach for the construction of this Lyapunov function is inspired by the Lyapunov function used in \cite{ref:mcnamee-2024:newton-automatica,ref:mcnamee-2024:newton-cdc}. 

% This section demonstrates a different Lyapunov function constructed when the cost function is a scalar function. The approach for the construction of this Lyapunov function is inspired by the Lyapunov function used in \cite{ref:mcnamee-2024:newton-automatica,ref:mcnamee-2024:newton-cdc}. 

Consider the system described by \eqref{eq:problem-statement:full-system} with scalar parameter and scalar auxiliary variable, i.e., $n=m=1$, so that $J:\mathbb{R}\to\mathbb{R}$ and $q:\mathbb{R}\to\mathbb{R}$.

\begin{prop}
    \label{prop:scalar-case-lyapunov-function}
	\textcolor{\addedcolor}{Consider the system \eqref{eq:problem-statement:full-system} with scalar parameter and scalar auxiliary variable, i.e., $n=m=1$, so that $J:\mathbb{R}\to\mathbb{R}$ and $q:\mathbb{R}\to\mathbb{R}$.} If Assumptions \ref{asmp:unique-minimum} and \ref{asmp:single-critical-point} are satisfied in addition to $q\in\mathcal{C}^{1}$, then the Lyapunov function 
    \begin{equation}
	    \label{eq:lyapunov-functions:scalar}
	    V_{6}(\vartheta,\tilde{\varphi}) = \vartheta^2 + \sgn(\vartheta)\int_{0}^{\vartheta}\left\vert q'(x) \right\vert dx + \ln(1 + \tilde{\varphi}^2),
	\end{equation}
	where $\sgn(\cdot)$ is the usual sign function and $\tilde{\varphi} = \varphi - q(\vartheta)$, has a negative definite \textcolor{\addedcolor}{time derivative} that satisfies the bound
	\begin{equation}
	    \label{eq:lyapunov-functions:scalar:lie-derivative-bound}
	    \dot{V}_{6}(\vartheta,\tilde{\varphi}) \leq - \frac{2\omega_{\ell}\tilde{\varphi}^2}{1+\tilde{\varphi}^2} -2K(\tilde{\varphi} + q(\vartheta))\vartheta J'(\vartheta)
	\end{equation}
    and is a negative definite function.
\end{prop}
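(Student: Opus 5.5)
The plan is to work entirely in the error coordinates $(\vartheta,\tilde{\varphi})$ with $\tilde{\varphi}=\varphi-q(\vartheta)$. Since $q\in\mathcal{C}^1$, this change of coordinates is a $\mathcal{C}^1$ diffeomorphism of $\mathbb{R}^2$. I would first rewrite \eqref{eq:problem-statement:full-system} in these coordinates. With $n=m=1$ we have $\dot{\vartheta}=-K(\tilde{\varphi}+q(\vartheta))J'(\vartheta)$, and differentiating $\tilde{\varphi}$ gives
\begin{equation*}
\dot{\tilde{\varphi}} = \omega_{\ell}(q(\vartheta)-\varphi) - q'(\vartheta)\dot{\vartheta} = -\omega_{\ell}\tilde{\varphi} + q'(\vartheta)K(\tilde{\varphi}+q(\vartheta))J'(\vartheta).
\end{equation*}
Here $K$ is scalar and positive. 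I would then differentiate the three terms of $V_6$ separately and add the results.

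\textbf{Step 1: the $\vartheta^2$ term.} Its derivative is $-2K\vartheta J'(\vartheta)$, which is exactly the second term of \eqref{eq:lyapunov-functions:scalar:lie-derivative-bound}.

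\textbf{Step 2: the $\ln(1+\tilde{\varphi}^2)$ term.} Its derivative is
\begin{equation*}
\frac{2\tilde{\varphi}}{1+\tilde{\varphi}^2}\bigl(-\omega_{\ell}\tilde{\varphi} + q'KJ'\bigr).
\end{equation*}
Using $\bigl|2\tilde{\varphi}/(1+\tilde{\varphi}^2)\bigr|\le 1$, this is at most $-\frac{2\omega_\ell\tilde{\varphi}^2}{1+\tilde{\varphi}^2} + |q'(\vartheta)|\,K\,|J'(\vartheta)|$. The leftover cross term $|q'|K|J'|$ is what the middle term of $V_6$ is designed to cancel.

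\textbf{Step 3: the middle term $G(\vartheta)=\sgn(\vartheta)\int_0^\vartheta|q'(x)|dx$.} For $\vartheta\neq0$ we have $G'(\vartheta)=\sgn(\vartheta)|q'(\vartheta)|$, so $\dot G=-\sgn(\vartheta)|q'(\vartheta)|KJ'(\vartheta)$. The key structural fact is that Assumptions~\ref{asmp:unique-minimum} and \ref{asmp:single-critical-point} force $\sgn(J'(\vartheta))=\sgn(\vartheta)$ in the scalar case. Indeed, $J'$ is continuous and nonzero on $(0,\infty)$, so it has constant sign there. That sign must be positive, since otherwise $J$ would be decreasing on $(0,\infty)$ and $J(\vartheta)<J(0)=0$ for $\vartheta>0$, contradicting Assumption~\ref{asmp:unique-minimum}. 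The symmetric argument applies on $(-\infty,0)$. Hence $\sgn(\vartheta)J'(\vartheta)=|J'(\vartheta)|$, so $\dot G=-|q'|K|J'|$, which cancels the cross term from Step 2 exactly. Summing the three contributions yields \eqref{eq:lyapunov-functions:scalar:lie-derivative-bound}.

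\textbf{Step 4: negative definiteness and properness.} The same sign fact gives $\vartheta J'(\vartheta)>0$ for $\vartheta\neq0$. Since $K>0$, both terms of the bound are nonpositive, and their sum vanishes only when $\tilde{\varphi}=0$ and $\vartheta=0$. So the bound is negative definite in $(\vartheta,\tilde{\varphi})$. I would also check that $V_6$ is a legitimate Lyapunov function. It is positive definite because $G(\vartheta)\ge0$ (the sign of $\vartheta$ matches that of the integral). It is radially unbounded because $\vartheta^2\to\infty$ and $\ln(1+\tilde{\varphi}^2)\to\infty$. Finally, the coordinate change maps the origin to the origin, so the conclusion transfers back to $(\vartheta,\varphi)$.

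\textbf{Main obstacle: regularity of $G$ at $\vartheta=0$.} There $G$ has one-sided derivatives $\pm|q'(0)|$, so $V_6$ is not differentiable at $\vartheta=0$ unless $q'(0)=0$. I would handle this with the Dini derivative \eqref{eq:upper-right-dini-derivative}. At $\vartheta=0$ we have $J'(0)=0$, hence $\dot{\vartheta}=0$ and $|\vartheta(t+h)|=o(h)$. Since $G$ is Lipschitz near $0$, this gives $D^+G=0$ there. The cross term also vanishes at $\vartheta=0$, so the bound \eqref{eq:lyapunov-functions:scalar:lie-derivative-bound} still holds on that line, with equality to $-\frac{2\omega_\ell\tilde{\varphi}^2}{1+\tilde{\varphi}^2}$.
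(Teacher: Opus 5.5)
Your proposal is correct and follows essentially the same route as the paper: pass to the error coordinate $\tilde{\varphi}=\varphi-q(\vartheta)$, bound the cross term coming from $\ln(1+\tilde{\varphi}^2)$ using $\bigl|2\tilde{\varphi}/(1+\tilde{\varphi}^2)\bigr|\le 1$, and cancel it with the middle term via $\sgn(J'(\vartheta))=\sgn(\vartheta)$. Your extra steps fill in points the paper leaves implicit: you derive that sign identity from Assumptions~\ref{asmp:unique-minimum} and \ref{asmp:single-critical-point}, check that the middle term is nonnegative, and use a Dini derivative to handle the kink of $V_6$ at $\vartheta=0$ when $q'(0)\neq 0$. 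One small caveat: saying the coordinate change maps the origin to the origin uses $q(0)=0$ from Assumption~\ref{asmp:q-continuous}, which the proposition does not list; in general the equilibrium is $(0,q(0))$.
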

\begin{proof}
First, we introduce an error coordinate system of $\tilde{\varphi} = \varphi - q(\vartheta)$. The new dynamics are
\begin{subequations}
\begin{align}
    \dot{\vartheta} &= -K(\tilde{\varphi} + q(\vartheta)) J'(\vartheta) \\
    \label{eq:lyapunov-functions:scalar:auxiliary-parameter-error-ode}
    \dot{\tilde{\varphi}} &= -\omega_{\ell}\tilde{\varphi} - q'(\vartheta)\cdot\dot{\vartheta}
\end{align}
\end{subequations}
\textcolor{\addedcolor}{Note that the integral term of $V_6$ in \eqref{eq:lyapunov-functions:scalar} accounts for the interconnected term} in \eqref{eq:lyapunov-functions:scalar:auxiliary-parameter-error-ode}. Additionally, $V_{6}$ is a radially unbounded function, as the first and third terms in \eqref{eq:lyapunov-functions:scalar} are radially unbounded functions with respect to their arguments. The \textcolor{\addedcolor}{time derivative} of $V_{6}$ is
\begin{multline}
    \label{eq:lyapunov-functions:scalar:derivative}
    \dot{V}_{6} = - \frac{2\omega_{\ell}\tilde{\varphi}^2}{1+\tilde{\varphi}^2}-2K(\tilde{\varphi} + q(\vartheta))\, \vartheta \, J'(\vartheta)  \\ -K(\tilde{\varphi} + q(\vartheta))J'(\vartheta)\cdot\Big(  \sgn(\vartheta)\cdot \left\vert q'(\vartheta)\right\vert \\ \left. - \frac{2\tilde{\varphi}\cdot  q'(\vartheta)}{1 + \tilde{\varphi}^2}\right) 
\end{multline}
By Assumption~\ref{asmp:unique-minimum} and~\ref{asmp:single-critical-point}, $\sgn\left(J'(\vartheta)\right) = \sgn(\vartheta)$ which allows us to bound \eqref{eq:lyapunov-functions:scalar:derivative} by
\begin{multline}
    \dot{V}_{6} \leq - \frac{2\omega_{\ell}\tilde{\varphi}^2}{1+\tilde{\varphi}^2} -2K(\tilde{\varphi} + q(\vartheta))\vartheta J'(\vartheta)  \\ -K(\tilde{\varphi} + q(\vartheta))\left\vert J'(\vartheta)\right\vert\cdot\left( \left\vert q'(\vartheta)\right\vert - \frac{2\vert\tilde{\varphi}\vert\cdot\left\vert q'(\vartheta)\right\vert}{1 + \tilde{\varphi}^2}\right) 
\end{multline}
By exploiting the fact that $\max_{\tilde{\varphi}\in\mathbb{R}}\left\vert \frac{2\tilde{\varphi}}{1 + \tilde{\varphi}^2} \right\vert = 1$, the bound on $\dot{V}_{6}$ is further simplified to what is shown in \eqref{eq:lyapunov-functions:scalar:lie-derivative-bound}.
\end{proof} 
\textcolor{\addedcolor}{The Lyapunov function $V_2$ in Section~\ref{sec:motivating-example} is the special case of $V_6$ obtained with $q(\vartheta)=4\vartheta^2$ and $K(\varphi)=(1+\sqrt{|\varphi|})^{-1}$.} \textcolor{\addedcolor}{While the $\ln(1+\tilde{\varphi}^2)$ term of $V_6$ is a standard $\mathcal{K}_\infty$ scaling in previous literature (cf.~\cite{ref:arcak-2002}), the construction of $V_6$ is distinct from the value-domain scaling of previous works such as \cite{ref:ito-2006,ref:ito-2010,ref:ito-2013}. $V_6$ uses the error coordinate $\tilde{\varphi}=\varphi-q(\vartheta)$ rather than the provided $\varphi$ and the term $\sgn(\vartheta)\int_0^{\vartheta}\vert q'(x)\vert\,dx$ is a sign-controlled compensation for the cross-coupling in \eqref{eq:lyapunov-functions:scalar:auxiliary-parameter-error-ode} rather than a scaling of a subsystem Lyapunov value.}
% This compensation is also what restricts the construction to the scalar setting $n=m=1$: it relies on the identity $\sgn(J'(\vartheta))=\sgn(\vartheta)$, used immediately after \eqref{eq:lyapunov-functions:scalar:derivative} to render the indefinite cross-term sign-definite, which holds because $\vartheta\in\mathbb{R}$ is totally ordered---a cost with a unique minimum at the origin and no other critical point has $J'(\vartheta)$ and $\vartheta$ of the same sign. For $\vartheta\in\mathbb{R}^{n}$ with $n>1$ there is no sign function and $\nabla J(\vartheta)$ is a vector, so the identity is unavailable; the general, non-scalar case is covered by Theorem~\ref{thm:adaptive-gradient-methods-are-gas} and Proposition~\ref{lem:integral-transforms}.}

% We now reveal that $V_{2}$ in Section~\ref{sec:motivating-example} is a specific instance of $V_{6}$ with $q(\vartheta) = 4\vartheta^2$ and summarize the result with the following proposition. 

\subsection{Lyapunov Function with Integral Transforms}
    \label{sec:constructing-lyapunov-functions:integral-transforms}
From Section \ref{sec:problem-statement}, we know that $J$ is a Lyapunov function for proving \eqref{eq:problem-statement:parameter-update} is strictly decreasing and that $\vert \varphi \vert^2$ worked as a Lyapunov function for proving \eqref{eq:problem-statement:auxilliary-update} is ISS to the origin. Generally, we do not expect a naive sum of $J(\vartheta)~+~\vert\varphi\vert^2$ to have a negative definite \textcolor{\addedcolor}{time derivative}. Inspired by \textcolor{\addedcolor}{\cite{ref:ito-2006,ref:ito-2009,ref:ito-2010,ref:ito-2012,ref:ito-2013,ref:ito-2020}}, we use integral transforms of these two terms in the next proposition. 

\begin{prop}
    \label{prop:integral-transforms}
    Consider the system described by \eqref{eq:problem-statement:full-system}. If Assumptions \ref{asmp:unique-minimum}-\ref{asmp:q-continuous} hold and there exists $\gamma:\mathbb{R}^{+}\to\mathbb{R}^{+}$ such
    \begin{equation}
	    \label{eq:lyapunov-functions:integrals:gamma-2-gain-constraint}
	    \lambda_{\min}\left(K(\varphi)\right) \geq \gamma(\vert\varphi\vert^2) > 0,\quad \textcolor{\addedcolor}{\forall \varphi\in\mathbb{R}^m}
	\end{equation}
	then the Lyapunov function
    \begin{multline}
        \label{eq:lyapunov-functions:integrals:expanded}
        V_{7}(\vartheta,\varphi) = J(\vartheta) + \int_0^{J(\vartheta)} \frac{\left(\rho_q \circ \alpha_{\vartheta,1}^{-1}(r)\right)^2}{4 b_{g}(r)}  dr \\ + \frac{1}{2\omega_{\ell}} \int_0^{|\varphi|^2} \gamma(r)  dr
    \end{multline}
    where
    \begin{align}
	    \label{eq:lyapunov-functions:integrals:bg}
	    \ b_g (c) := \min_{\vartheta' \in \mathbb{R}^n \text{ s.t. } J(\vartheta') = c}\vert \nabla J(\vartheta') \vert^2
	\end{align}
    has a \textcolor{\addedcolor}{time derivative} bounded by 
    \begin{multline}
	    \label{eq:lyapunov-functions:integrals:lie-derivative-bound}
	    \dot{V}_{7}(\vartheta,\varphi) \leq -\left(\frac{1}{8}\left(\rho_q\circ\alpha_{\vartheta,1}^{-1}\circ J(\vartheta)\right)^2 \right. \\ \left .+ \vert\nabla J(\vartheta) \vert^2 + \frac{1}{2}\vert\varphi\vert^2\right)\cdot\gamma(\vert\varphi\vert^2)
	\end{multline}
    and is a negative definite function.
\end{prop}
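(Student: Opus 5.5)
The plan is to differentiate $V_{7}$ term by term along \eqref{eq:problem-statement:full-system}. Each of the three pieces yields a contribution proportional to $\gamma(\vert\varphi\vert^2)$. The indefinite cross term produced by the $\varphi$-integral is then absorbed by the negative term produced by the middle $J$-integral, using Young's inequality and the growth bound \eqref{eq:property-growth-rate-q}.

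\textbf{Step 1 (the $J$ term).} By \eqref{eq:problem-statement:j-dot-condition} and \eqref{eq:lyapunov-functions:integrals:gamma-2-gain-constraint},
\begin{equation*}
\dot{J}\leq-\lambda_{\min}(K(\varphi))\vert\nabla J(\vartheta)\vert^2\leq-\gamma(\vert\varphi\vert^2)\vert\nabla J(\vartheta)\vert^2 .
\end{equation*}

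\textbf{Step 2 (the middle term).} By the chain rule, its derivative equals $\frac{(\rho_q\circ\alpha_{\vartheta,1}^{-1}(J))^2}{4b_g(J)}\dot{J}$. Since $\dot{J}\leq0$, we may use $\dot{J}\leq-\gamma\vert\nabla J\vert^2$. By the definition \eqref{eq:lyapunov-functions:integrals:bg}, $\vert\nabla J(\vartheta)\vert^2\geq b_g(J(\vartheta))$, so the ratio $\vert\nabla J\vert^2/b_g(J)\geq1$. This gives the bound $-\tfrac14(\rho_q\circ\alpha_{\vartheta,1}^{-1}\circ J)^2\gamma(\vert\varphi\vert^2)$.

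\textbf{Step 3 (the $\varphi$ term).} Its derivative is
\begin{equation*}
\frac{1}{2\omega_\ell}\gamma(\vert\varphi\vert^2)\cdot2\omega_\ell\bigl(\varphi^Tq(\vartheta)-\vert\varphi\vert^2\bigr).
\end{equation*}
Applying Young's inequality $\varphi^Tq\leq\tfrac12\vert\varphi\vert^2+\tfrac12\vert q\vert^2$ gives $\gamma\bigl(-\tfrac12\vert\varphi\vert^2+\tfrac12\vert q\vert^2\bigr)$. Next, \eqref{eq:property-growth-rate-q} together with $\vert\vartheta\vert\leq\alpha_{\vartheta,1}^{-1}(J(\vartheta))$ from Assumption~\ref{asmp:radially-unbounded} gives $\tfrac12\vert q\vert^2\leq\tfrac18(\rho_q\circ\alpha_{\vartheta,1}^{-1}\circ J)^2$.

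\textbf{Combining.} Adding the three bounds, the $-\tfrac14$ and $+\tfrac18$ coefficients combine to $-\tfrac18$, which gives exactly \eqref{eq:lyapunov-functions:integrals:lie-derivative-bound}. Since $\gamma>0$, the right-hand side vanishes only when $\vert\varphi\vert=0$ and $\nabla J(\vartheta)=0$, i.e.\ $\vartheta=0$ by Assumption~\ref{asmp:single-critical-point}. Hence the bound is negative definite.

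\textbf{Main obstacle: well-posedness of $V_{7}$.}

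\emph{Positivity of $b_g$.} For $c>0$ the level set $\{J=c\}$ is nonempty, by continuity of $J$, $J(0)=0$ and radial unboundedness. It is also compact, by Assumption~\ref{asmp:radially-unbounded}, and excludes the origin. So the minimum in \eqref{eq:lyapunov-functions:integrals:bg} is attained and positive by Assumption~\ref{asmp:single-critical-point}.

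\emph{Measurability of the integrand.} I would show $b_g$ is lower semicontinuous, using compactness of nearby level sets and continuity of $\nabla J$. This makes the integrand measurable and locally bounded on $(0,\infty)$.

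\emph{Integrability at $r=0$.} This is delicate, because $b_g(0)=0$ while the numerator also vanishes. I would first try to argue that the integral converges, and otherwise replace $\rho_q$ by a smaller admissible $\mathcal{K}_\infty$ bound near zero, or read the statement as implicitly requiring finiteness.

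\emph{Positive definiteness and radial unboundedness.} The first two terms dominate $\alpha_{\vartheta,1}(\vert\vartheta\vert)$, and the last term is positive definite in $\varphi$ because $\gamma>0$. Radial unboundedness in $\varphi$ needs $\int_0^\infty\gamma=\infty$; I would note this as an additional requirement, or else fall back on the Lyapunov-derivative argument of Theorem~\ref{thm:adaptive-gradient-methods-are-gas}.
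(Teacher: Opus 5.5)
Your proof is correct and follows the paper's argument essentially step for step: you differentiate the three terms, use $\vert\nabla J(\vartheta)\vert^2\geq b_g(J(\vartheta))$ together with $\gamma(\vert\varphi\vert^2)\leq\lambda_{\min}(K(\varphi))$ to obtain $-\tfrac14(\rho_q\circ\alpha_{\vartheta,1}^{-1}\circ J)^2\gamma(\vert\varphi\vert^2)$, and then apply Young's inequality and \eqref{eq:property-growth-rate-q} so that the $+\tfrac18$ from $\tfrac12\vert q\vert^2$ leaves the $-\tfrac18$ of the stated bound. Your well-posedness checks go beyond the paper, which never verifies that the middle integral is finite at $r=0$; that concern is genuine, and shrinking $\rho_q$ cannot always fix it (for $J(\vartheta)=\vartheta^4$ and $q(\vartheta)=\vert\vartheta\vert$, every admissible $\rho_q$ and $\alpha_{\vartheta,1}$ give an integrand of at least $1/(16r)$), so finiteness must be added as a hypothesis; likewise, the chain rule in Step~2 needs $b_g$ to be continuous on $(0,\infty)$, not merely lower semicontinuous, which does hold because $\nabla J\neq0$ on each level set $\{J=c\}$ with $c>0$.
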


\begin{proof}
To prove that $V_7$ in \eqref{eq:lyapunov-functions:integrals:expanded} has a negative definite \textcolor{\addedcolor}{time derivative, we start by taking its derivative along the trajectories of \eqref{eq:problem-statement:full-system}} % Lie derivative, we start by taking the derivative of the Lyapunov function along the trajectories
\begin{align}
    \dot{V}_{7}(\vartheta,\varphi) &{}={} -\nabla J(\vartheta)^T K(\varphi)\nabla J(\vartheta) \notag \\ &\quad - \frac{\left(\rho_q \circ \alpha_{\vartheta,1}^{-1}\circ J (\vartheta)\right)^2}{4 b_{g}\circ J (\vartheta)}\nabla J(\vartheta)^T K(\varphi)\nabla J(\vartheta) \notag \\ &\quad +\gamma\left(\vert\varphi\vert^2\right)\cdot \varphi^T\left( q(\vartheta) - \varphi\right)
\end{align}
Using the definition of $b_g$ \textcolor{\addedcolor}{in \eqref{eq:lyapunov-functions:integrals:bg}} and the inequality 
\begin{equation}
	\nabla J(\vartheta)^T K(\varphi)\nabla J(\vartheta) \geq \lambda_{\min}(K(\varphi))\cdot \vert \nabla J(\vartheta) \vert^2,
\end{equation}
we bound $\dot{V}_7$ by
\begin{align}
    \dot{V}_{7}(\vartheta,\varphi) &{}\leq{} -\lambda_{\min}(K(\varphi))\cdot \vert \nabla J(\vartheta) \vert^2 \notag \\ &\quad - \textcolor{\addedcolor}{\frac{1}{4}}\left(\rho_q \circ \alpha_{\vartheta,1}^{-1}\circ J (\vartheta)\right)^2\cdot \lambda_{\min}(K(\varphi))\notag \\ &\quad -\gamma (\vert\varphi\vert^2)\cdot\left[\vert\varphi\vert^2 - \varphi^T q\left(\vartheta\right)\right]
\end{align}
By using Young's inequality, we obtain
\begin{align}
	\label{eq:lyapunov-functions:integrals:derivative:youngs-inequality}
    \dot{V}_{7}(\vartheta,\varphi) &{}\leq{} -\lambda_{\min}(K(\varphi))\cdot \vert \nabla J(\vartheta) \vert^2 \notag \\ &\quad - \textcolor{\addedcolor}{\frac{1}{4}}\left(\rho_q \circ \alpha_{\vartheta,1}^{-1}\circ J (\vartheta)\right)^2\cdot \lambda_{\min}(K(\varphi))\notag \\ &\quad + \frac{1}{2} \vert q(\vartheta)\vert^2\cdot\textcolor{\addedcolor}{\gamma}(\vert\varphi\vert^2) - \frac{1}{2}\gamma (\vert\varphi\vert^2)\cdot\vert\varphi\vert^2
\end{align}
Since $\gamma$ satisfies \eqref{eq:lyapunov-functions:integrals:gamma-2-gain-constraint}, we further simplify the bound to
\begin{multline}
	\label{eq:lyapunov-functions:integrals:derivative:gamma-2-constraint}
    \dot{V}_{7}(\vartheta,\varphi) {}\leq{} -\left(\frac{1}{4} \left(\rho_q\circ\alpha_{\vartheta,1}^{-1}\circ J(\vartheta)\right)^2  \right. \\ \left. + \vert \nabla J(\vartheta) \vert^2 + \frac{1}{2} \vert\varphi\vert^2  - \frac{1}{2} \vert q(\vartheta)\vert^2  \right)\cdot\gamma (\vert\varphi\vert^2)
\end{multline}
\textcolor{\addedcolor}{Inside the parenthesis of \eqref{eq:lyapunov-functions:integrals:derivative:gamma-2-constraint}, the first three terms are each non-negative. The only term that may prevent the bracket from being positive definite is $-\frac{1}{2}\vert q(\vartheta)\vert^2$.} However, by manipulating the inequality \eqref{eq:property-growth-rate-q}, we bound $\vert q(\vartheta) \vert^2$ by
\begin{equation}
	\label{eq:lyapunov-functions:integrals:maximum-growth-bound}
	\left\vert q(\vartheta)\right\vert^2 \leq \frac{1}{4}\rho_q (\vert\vartheta\vert)^2 \leq \frac{1}{4}\left(\rho_q \circ \alpha_{\vartheta,1}^{-1} \circ J(\vartheta) \right)^2
\end{equation}
Application of \eqref{eq:lyapunov-functions:integrals:maximum-growth-bound} to \eqref{eq:lyapunov-functions:integrals:derivative:gamma-2-constraint} results in the bound given in \eqref{eq:lyapunov-functions:integrals:lie-derivative-bound}.
\end{proof}

% Remove remark
Although we proved that $V_7$ always has a negative definite \textcolor{\addedcolor}{time derivative}, $V_7$ is only radially unbounded if 
\begin{equation}
	\label{eq:lyapunov-functions:integrals:gamma-2-integral-condition}
	\lim_{c\to\infty}\int_0^c \gamma (r) dr = \infty
\end{equation}
For the motivating example in Section~\ref{sec:motivating-example}, we have found such a $\gamma$, which was defined as $\gamma(r) = (1 + \sqrt[4]{r})^{-1}$ so that
\begin{equation}
    \gamma(\varphi^2) = K(\varphi) = \frac{1}{1 + \sqrt{\vert\varphi\vert}} > 0
\end{equation}
and whose integral satisfies \eqref{eq:lyapunov-functions:integrals:gamma-2-integral-condition}. \textcolor{\addedcolor}{With $\alpha_{\vartheta,1}(r)=r^2$, $b_g(r)=4r$, $\rho_q(r)=8r^2$, and $\omega_\ell=1$, Proposition~\ref{prop:integral-transforms} yields exactly the Lyapunov function $V_{3}$ previewed in \eqref{eq:motivating-example:lyapunov-function:integrals-alternative} of Section~\ref{sec:motivating-example}.}

\textcolor{\addedcolor}{Additionally, while $V_7$ shares the integral-transform structure of the constructions in \cite{ref:ito-2006,ref:ito-2009,ref:ito-2010,ref:ito-2012,ref:ito-2013,ref:ito-2020}, the scalings inside the integral transforms are selected differently. Rather than obtaining them from the subsystems' supply rates by solving a state-dependent scaling problem, we determine them directly from the cost function $J$ and the growth of its gradient.}

\section{Additional Adaptive Gradient Systems Examples}
%with Other Adaptive Gradient Systems}
    \label{sec:examples}

Having shown three methodologies for generating Lyapunov functions, we now examine some example adaptive gradient systems and the Lyapunov functions generated for a scalar quadratic map $J(\vartheta) = \vartheta^2$ to see the complexity and applicability of the Lyapunov functions. Note that $\alpha_{\vartheta,1}(r)~=~r^2$ and $b_g(r)~=~4r$ for this section, while $\rho_q$ changes.
% \begin{equation}
%     \label{eq:examples:quadratic-cost-function}
%     J(\vartheta) = \vartheta^2
% \end{equation}

\subsection{RMSProp}
    \label{sec:examples:rmsprop}

RMSProp was introduced in \cite{ref:tieleman-2012} with the idea of scaling the learning rate for each parameter direction inversely by an approximate magnitude of the gradient based on the previous gradient information. The continuous-time dynamics, as given in \cite{ref:ma-2021}, is written element-wise as
\begin{subequations}
\begin{align}
    \label{eq:examples:rmsprop:parameter-update-ode}
    \dot{\vartheta}_i &= \frac{-k\frac{\partial J}{\partial \vartheta_i}(\vartheta)}{\varepsilon + \sqrt{\vert \varphi_i \vert}} \\
    \label{eq:examples:rmsprop:gradient-squared-estimate-ode}
    \dot{\varphi}_i &= \omega_{\ell}\left(\frac{\partial J}{\partial\vartheta_i}(\vartheta)\right)^2 - \omega_{\ell}\varphi_i
\end{align}
\end{subequations}
for $i=1,2,\ldots,n$ where $\varepsilon > 0$ is a small parameter to avoid a division by zero. For our abstraction, $K(\varphi)$ is a strictly diagonal positive definite matrix with $k (\sqrt{\vert \varphi_i \vert} + \varepsilon)^{-1/2}$ as its $i$-th diagonal element and $q(\vartheta)$ is the element-squared gradient. In truth, the system given in Section~\ref{sec:motivating-example} was RMSProp acting on $J(\vartheta) = \vartheta^2$ with $\varepsilon~=~1$, so we give a corollary of Theorem~\ref{thm:adaptive-gradient-methods-are-gas} before discussing other systems.
\begin{cor}
	The RMSProp system with a cost function $J$ satisfying Assumptions~\ref{asmp:unique-minimum}-\ref{asmp:radially-unbounded} is GAS at the origin.
\end{cor}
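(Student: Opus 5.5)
The plan is to obtain the corollary directly from Theorem~\ref{thm:adaptive-gradient-methods-are-gas}. We write RMSProp in the abstracted form \eqref{eq:problem-statement:full-system} and check the structural requirements of that form together with Assumption~\ref{asmp:q-continuous}. Assumptions~\ref{asmp:unique-minimum}--\ref{asmp:radially-unbounded} on $J$ are hypotheses of the corollary, so they need no verification.

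\emph{Gain.} We take $K(\varphi)=\mathrm{diag}\bigl(k/(\varepsilon+\sqrt{\vert\varphi_1\vert}),\ldots,k/(\varepsilon+\sqrt{\vert\varphi_n\vert})\bigr)$, reading off the entries directly from \eqref{eq:examples:rmsprop:parameter-update-ode}. We then check three properties.
\begin{itemize}
\item $K$ is continuous, since $\varphi_i\mapsto\sqrt{\vert\varphi_i\vert}$ is continuous and the denominator is at least $\varepsilon>0$.
\item $K(\varphi)$ is symmetric, being diagonal.
\item $K(\varphi)$ is positive definite for every $\varphi\in\mathbb{R}^n$, because each diagonal entry is strictly positive when $k>0$ and $\varepsilon>0$.
\end{itemize}
With this $K$, \eqref{eq:examples:rmsprop:parameter-update-ode} is exactly $\dot\vartheta=-K(\varphi)\nabla J(\vartheta)$ with $m=n$.

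\emph{Adaptation law.} We set $q(\vartheta)=\bigl((\partial J/\partial\vartheta_1)^2,\ldots,(\partial J/\partial\vartheta_n)^2\bigr)^T$, so that \eqref{eq:examples:rmsprop:gradient-squared-estimate-ode} becomes $\dot\varphi=\omega_\ell(q(\vartheta)-\varphi)$. For Assumption~\ref{asmp:q-continuous} we argue as follows.
\begin{itemize}
\item $J\in\mathcal{C}^1$ by Assumption~\ref{asmp:single-critical-point}, so each partial derivative is continuous, and hence each squared component of $q$ is continuous.
\item Assumption~\ref{asmp:single-critical-point} gives $\nabla J(0)=0$, so $q(0)=0$ with no shift of coordinates needed.
\end{itemize}
The growth bound \eqref{eq:property-growth-rate-q} then follows from continuity exactly as in Section~\ref{sec:problem-statement}. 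Concretely, one may take $\rho_q(r)=2\max_{\vert x\vert\le r}\vert q(x)\vert+r$, which is $\mathcal{K}_\infty$.

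With all four assumptions verified, Theorem~\ref{thm:adaptive-gradient-methods-are-gas} gives GAS at the origin. The max-type function $V_5$ serves as an explicit certificate, built with this $\rho_q$ and the $\alpha_{\vartheta,1}$ from Assumption~\ref{asmp:radially-unbounded}.

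There is no deep obstacle here. The one point needing care is that the paper's text lists the diagonal entry as $k(\sqrt{\vert\varphi_i\vert}+\varepsilon)^{-1/2}$, while the ODE has exponent $-1$. Either choice is continuous and strictly positive, so the argument covers both. The other point to note is that positivity of $K$ is global only because $\varepsilon>0$. It is also worth remarking that the example's $V_1$ is recovered when $n=1$, $J=\vartheta^2$, $\varepsilon=1$ and $k=2$.
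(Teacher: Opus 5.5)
Your proposal is correct and matches the paper, which obtains the corollary directly from Theorem~\ref{thm:adaptive-gradient-methods-are-gas} by identifying $K(\varphi)$ as the positive diagonal gain and $q(\vartheta)$ as the element-squared gradient. Your explicit checks (continuity and positivity of $K$, $q(0)=0$, the concrete $\mathcal{K}_\infty$ choice of $\rho_q$) and your handling of the paper's $-1/2$ versus $-1$ exponent typo only make the argument more careful; the one slip is in your closing remark, where the motivating example is RMSProp with $k=\omega_\ell=\varepsilon=1$, not $k=2$, because $J'(\vartheta)=2\vartheta$ already supplies the factor $2$ (and $V_1$ itself does not depend on $k$ or $\varepsilon$ in any case).
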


\subsection{AdaHessian}
    \label{sec:examples:adahessian}

Similar to RMSprop, AdaHessian \cite{ref:yao-2021} seeks to normalize the descent rate of the element-wise parameter dynamics but by the approximate magnitude of the diagonal elements of the Hessian rather than the approximate magnitude of the gradient. For simplicity, we consider the continuous-time dynamics of the AdaHessian system without the momentum term present in \cite{ref:yao-2021}. The dynamics are defined element-wise as
\begin{subequations}
\begin{align}
    \label{eq:adahessian:parameter-update-ode}
    \dot{\vartheta}_i &= -k \frac{\frac{\partial J}{\partial \vartheta_i}\left(\vartheta\right)}{\sqrt{\vert \varphi_i\vert}+\varepsilon} \\
    \label{eq:adahessian:estimate-update-ode}
    \dot{\varphi}_i &= \omega_{\ell}\left(\left( \frac{\partial^2 J}{\partial \vartheta_i^2}\left(\vartheta\right)\right)^2 -\varphi_i\right)
\end{align}
\end{subequations}
for $i=1,2,\ldots,n$ and $\varepsilon > 0$ is a small term to avoid \textcolor{\addedcolor}{division} by zero. The gain $K(\varphi)$ is the same as RMSProp but now $q(\vartheta)$ is the element-squared diagonal of the Hessian matrix. This method has the advantage of extending Newton-based methods to nonconvex $J$ although, since the full Hessian is not estimated, the system does \textcolor{\addedcolor}{not} have the assignable convergence rate of a standard Newton-based method \cite{ref:ghaffari-2012}. However, it is an example of an adaptive gradient method, and thus we give the following corollary of Theorem~\ref{thm:adaptive-gradient-methods-are-gas}.
\begin{cor}
	The AdaHessian system with a twice differentiable cost function $J$ that satisfies Assumptions~\ref{asmp:unique-minimum}-\ref{asmp:radially-unbounded} is GAS at the origin.
\end{cor}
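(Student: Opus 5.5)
This corollary will follow by casting AdaHessian in the abstract form \eqref{eq:problem-statement:full-system} and invoking Theorem~\ref{thm:adaptive-gradient-methods-are-gas}. Concretely, I will take $m=n$ and define
\begin{equation*}
K(\varphi)=\operatorname{diag}\bigl(k(\sqrt{\vert\varphi_1\vert}+\varepsilon)^{-1},\ldots,k(\sqrt{\vert\varphi_n\vert}+\varepsilon)^{-1}\bigr),
\end{equation*}
and
\begin{equation*}
q_i(\vartheta)=\Bigl(\tfrac{\partial^2 J}{\partial\vartheta_i^2}(\vartheta)\Bigr)^2 .
\end{equation*}
With these choices, \eqref{eq:adahessian:parameter-update-ode}--\eqref{eq:adahessian:estimate-update-ode} coincide with \eqref{eq:problem-statement:parameter-update}--\eqref{eq:problem-statement:auxilliary-update}. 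The remaining work is to verify the hypotheses of the theorem.

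\textbf{Checking the gain and the cost.} Each diagonal entry of $K$ is continuous in $\varphi$ and lies in $(0,k/\varepsilon]$, because $k>0$ and $\varepsilon>0$. Hence $K(\varphi)$ is continuous, symmetric, and positive definite for every $\varphi$, as required in Section~\ref{sec:problem-statement}. Assumptions~\ref{asmp:unique-minimum}--\ref{asmp:radially-unbounded} are assumed directly on $J$; the $\mathcal{C}^1$ requirement in Assumption~\ref{asmp:single-critical-point} is implied by twice differentiability together with continuity of the Hessian (see below).

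\textbf{Checking Assumption~\ref{asmp:q-continuous} (the main obstacle).} Assumption~\ref{asmp:q-continuous} asks that $q$ be continuous with $q(0)=0$. Continuity needs the diagonal Hessian entries to be continuous, so I will read ``twice differentiable'' as $J\in\mathcal{C}^2$; otherwise the right-hand side of \eqref{eq:adahessian:estimate-update-ode} might not be continuous.

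The normalization $q(0)=0$ generally fails. At a strict minimizer one typically has $\partial^2 J/\partial\vartheta_i^2(0)\neq 0$, so $q(0)=q^\ast$ with $q^\ast\neq 0$ in general. I plan to handle this by a shift of coordinates. Set $\bar\varphi=\varphi-q^\ast$ and $\bar q(\vartheta)=q(\vartheta)-q^\ast$. Then
\begin{equation*}
\dot{\bar\varphi}=\omega_\ell(\bar q(\vartheta)-\bar\varphi), \qquad \bar K(\bar\varphi)=K(\bar\varphi+q^\ast).
\end{equation*}
The new gain $\bar K$ is still continuous and positive definite, and $\bar q$ is continuous with $\bar q(0)=0$. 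This is exactly the sense of the ``wlog'' in Assumption~\ref{asmp:q-continuous}.

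\textbf{Conclusion.} Applying Theorem~\ref{thm:adaptive-gradient-methods-are-gas} in the shifted coordinates gives GAS of $(\vartheta,\bar\varphi)=(0,0)$, that is, of the equilibrium $(0,q^\ast)$ in the original coordinates. This is the origin of the system once expressed in the form \eqref{eq:problem-statement:full-system}.
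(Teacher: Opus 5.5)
Your proposal is correct and follows the same route as the paper, which obtains this corollary by recognizing AdaHessian as an instance of \eqref{eq:problem-statement:full-system} (diagonal $K$, with $q$ the element-squared Hessian diagonal) and invoking Theorem~\ref{thm:adaptive-gradient-methods-are-gas}. Your two refinements are what that invocation tacitly needs: reading ``twice differentiable'' as $J\in\mathcal{C}^2$ so that Assumption~\ref{asmp:q-continuous} holds, and shifting to $\bar\varphi=\varphi-q(0)$ to realize the ``wlog'' normalization $q(0)=0$, which matches the paper's own shift $\tilde{\varphi}=\varphi-4$ in its quadratic example.
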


To illustrate our various Lyapunov function constructions, we now consider the AdaHessian system with $\varepsilon~=~1$ acting on a scalar quadratic map
\begin{subequations}
\begin{align}
    \label{eq:examples:adahessian:parameter-update-ode}
    \dot{\vartheta} &= \frac{-2k\vartheta}{1 + \sqrt{\vert \varphi \vert}} \\
    \label{eq:examples:adahessian:estimate-update-ode}
    \dot{\varphi} &=\textcolor{\addedcolor}{ \omega_{\ell}\left(4 - \varphi\right)}
\end{align}
\end{subequations}
where the equilibrium is $(\vartheta,\varphi)=(0, 4)$. We introduce the error coordinate system of $\tilde{\varphi} = \varphi - 4$ which has the dynamics 
\begin{subequations}
\begin{align}
    \label{eq:examples:adahessian:error:parameter-update-ode}
    \dot{\vartheta} &= \frac{-2k\vartheta}{1 + \sqrt{\vert \textcolor{\addedcolor}{4+\tilde{\varphi}} \vert}} \\
    \label{eq:examples:adahessian:error:estimate-update-ode}
    \dot{\tilde{\varphi}} &= \textcolor{\addedcolor}{-\omega_{\ell}\tilde{\varphi}}
\end{align}
\end{subequations}
It is trivial to show that the naive sum of squares Lyapunov function $\textcolor{\addedcolor}{V_{8}}(\vartheta,\tilde{\varphi})~=~\vartheta^2~+~\tilde{\varphi}^2$ works for proving the system is GAS, but let us consider the Lyapunov functions discussed in Section~\ref{sec:constructing-lyapunov-functions}. For constructing these functions, any choice of $\rho_q~\in~\mathcal{K}$ would satisfy \eqref{eq:property-growth-rate-q} as $q(\vartheta)$ is a constant mapping, but for simplicity we choose $\rho_q(r)~=~r$. The Lyapunov functions of our constructive methods are
\begin{subequations}

\begin{align}
    V_{9}(\vartheta,\tilde{\varphi}) &= \vartheta^2 + \max\left\lbrace \vartheta^2, \tilde{\varphi}^2 \right\rbrace \\
    V_{10}(\vartheta,\tilde{\varphi}) &= \vartheta^2 + \ln\left(1 + \tilde{\varphi}^2\right) \\
    V_{11}(\vartheta, \tilde{\varphi}) &= \frac{17}{16}\vartheta^2 + \frac{1}{2\omega_{\ell}}\int_0^{\tilde{\varphi}^2} \frac{1}{1 + \sqrt{4 + \sqrt{r}}} dr
\end{align}
\end{subequations}
with \textcolor{\addedcolor}{time derivatives} of
\begin{subequations}
\begin{align}
    \dot{V}_{9}(\vartheta,\tilde{\varphi}) &\leq \frac{-4k\vartheta^2}{1 + \sqrt{\vert 4 + \tilde{\varphi} \vert}} + \begin{cases}
        -\omega_{\ell} \tilde{\varphi}^2 & \text{ if } \vert \tilde{\varphi} \vert > \vert \vartheta \vert \\
        0 & \text{ otherwise}
    \end{cases}\\
    \dot{V}_{10}(\vartheta,\tilde{\varphi}) &= \frac{-4k\vartheta^2}{1 + \sqrt{\vert 4 + \tilde{\varphi} \vert}} - \frac{2\omega_{\ell}\tilde{\varphi}^2}{1 + \tilde{\varphi}^2} \\
    \dot{V}_{11}(\vartheta, \tilde{\varphi}) &\leq \frac{-\frac{33}{8}k\vartheta^2 - \frac{1}{2}\tilde{\varphi}^2}{1 + \sqrt{4 + \vert\tilde{\varphi}\vert}}
\end{align}
\end{subequations}
$V_{9}$ and $V_{10}$ are straightforward to reason about their construction and derivatives, and are very close to the naive \textcolor{\addedcolor}{$V_{8}$}. They are more complicated than \textcolor{\addedcolor}{$V_{8}$}, as that is the price paid for the generality of Sections~\ref{sec:constructing-lyapunov-functions:maximum} and \ref{sec:constructing-lyapunov-functions:scalar-maps}, but this price seems reasonable considering that \textcolor{\addedcolor}{$V_{8}$} does not work for all $J$.

$V_{11}$ needs further explanation, specifically on how we select $\gamma$ such that \eqref{eq:lyapunov-functions:integrals:gamma-2-integral-condition} is satisfied so that $V_{11}$ is radially unbounded. We have the inequality $K(4 + \tilde{\varphi}) \geq \frac{1}{1 + \sqrt{4 + \vert \tilde{\varphi} \vert}}$ by the triangular inequality of $\vert\cdot\vert$. This leads to the selection of $\gamma(r) = (1 + \sqrt{4 + \sqrt{r}})^{-1}$ for constructing $V_{11}$. Additionally, we further expand the integral of $V_{11}$ to
\begin{multline}
    \int_0^{\tilde{\varphi}^2} \gamma(r) dr = \frac{40}{3} + \frac{4}{3}\left(\vert\tilde{\varphi}\vert - 5\right)\left(\sqrt{4 + \vert\tilde{\varphi}\vert}\right)  - 2\vert\tilde{\varphi}\vert \\ -12\ln(3) + 12\ln\left(1 + \sqrt{4 + \vert\tilde{\varphi}\vert}\right)
\end{multline}
Clearly, the antiderivative of $\gamma$ is not trivial for this example, although we found a closed-form expression in terms of elementary functions. However, for different cost functions, such as $J(\vartheta) = \sqrt{2}\vartheta^2$, it may not be possible to find such a closed-form expression. Even without the closed-form expression, we prove that the integral is radially unbounded for any constant mapping $q$ with $q(\cdot) > 0$, not just four as in this example, by a comparison to the function $r^{-1}$. Since
\begin{equation}
    1 + \sqrt{q(0) + \sqrt{\vert r \vert}} \in \mathcal{O}(r^{1/4})\subset \mathcal{O}(r)\quad \text{ as } r\to\infty
\end{equation} there exists a finite $r_0 > 0$ such that $1 + \sqrt{q(0) + \sqrt{\vert r \vert}} \leq r$, for all $r \geq r_0$, and consequently
\begin{equation}
    \int_{r_0}^{c} \left(1 + \sqrt{q(0) + \sqrt{\vert r \vert}}\right)^{-1} dr \geq \int_{r_0}^c r^{-1} dr = \ln(c/r_0)
\end{equation}
where $c\geq r_0$. Since the natural logarithm is unbounded from above, it follows that $\int_{0}^{\tilde{\varphi}^2} \left(1 + \sqrt{q(0) + \sqrt{\vert r \vert}}\right)^{-1} dr$ is radially unbounded. Hence, the Lyapunov functions generated in Section~\ref{sec:constructing-lyapunov-functions:integral-transforms} prove stability but we may not be able to express the Lyapunov function in terms of elementary functions.

\subsection{An Example on Limitation of Integral Transform Approach}
%Interesting Example System}
    \label{sec:examples:counterexample}

While Sections~\ref{sec:examples:rmsprop} and \ref{sec:examples:adahessian} covered adaptive gradient methods that exist in previous literature, we want to investigate the following system
\begin{subequations}
\begin{align}
    \dot{\vartheta} &= -ke^{-\varphi^2}J'(\vartheta) \\
    \dot{\varphi} &= \omega_{\ell}\left(J'(\vartheta)^2 - \varphi\right)
\end{align}
\end{subequations}
for scalar $J$'s as this system gives us particular insights for our Lyapunov function constructions. For the quadratic scalar map with $k~=~\omega_{\ell}~=~1$, the system is
\begin{subequations}
\label{eq:examples:counterexample:full-system}
\begin{align}
    \label{eq:examples:counterexample:parameter-update-ode}
    \dot{\vartheta} &= -2e^{-\varphi^2}\vartheta \\
    \label{eq:examples:counterexample:auxiliary-update-ode}
    \dot{\varphi} &= 4\vartheta^2 - \varphi
\end{align}
\end{subequations}
with the origin as the equilibrium. The Lyapunov constructions from Sections~\ref{sec:constructing-lyapunov-functions:maximum} and \ref{sec:constructing-lyapunov-functions:scalar-maps} result in the previous Lyapunov functions of $V_{1}$ and $V_{2}$, respectively. \textcolor{\addedcolor}{Their time derivatives along \eqref{eq:examples:counterexample:full-system} are} % Their Lie derivatives with the system \eqref{eq:examples:counterexample:full-system} is
\begin{subequations}
\begin{align}
    \dot{V}_{1}(\vartheta,\varphi) &\leq -4 e^{-\varphi^2}\vartheta^2 - \begin{cases}
        \varphi^2 & \text{ if } \varphi^2 > 64\textcolor{\addedcolor}{\vartheta^4} \\
        0 & \text{ otherwise}
    \end{cases} \\
    \dot{V}_{2}(\vartheta,\varphi) &\leq -4 e^{-\varphi^2}\vartheta^2 - \frac{2\omega_{\ell}\left(4\vartheta^2 - \varphi\right)^2}{1 + \left(4\vartheta^2 - \varphi\right)^2}
\end{align}
\end{subequations}
which are negative definite functions.

Unlike our other Lyapunov function constructions, the method presented in Section~\ref{sec:constructing-lyapunov-functions:integral-transforms} fails to be able to prove that the system \eqref{eq:examples:counterexample:full-system} is GAS. To understand this, we note that $\gamma(r)~=~e^{-r}$ satisfies the inequality in \eqref{eq:lyapunov-functions:integrals:gamma-2-gain-constraint} but that
\begin{equation}
    \lim_{c\to\infty}\int_0^c e^{-r}dr = 1 - \lim_{c\to\infty} e^{-c} = 1
\end{equation}
Thus, we cannot use Section~\ref{sec:constructing-lyapunov-functions:integral-transforms} to construct a radially unbounded Lyapunov function for this system. However, this system is fairly unique in that it was constructed so that Section~\ref{sec:constructing-lyapunov-functions:integral-transforms} could not be used to prove stability even though that section was able to be used for the systems given in Sections~\ref{sec:examples:rmsprop} and \ref{sec:examples:adahessian}. Thus, Section~\ref{sec:constructing-lyapunov-functions:integral-transforms} is likely to be applicable for most systems that an analyst is studying, but it fails to give a generalizable Lyapunov function construction for adaptive gradient methods.

\section{Conclusion}

This work studies the stability of interconnected systems arising in the context of adaptive gradient methods in optimization. We prove that these systems are GAS and give three methodologies of constructing Lyapunov functions for proving this result. The constructions for the general case, the scalar case, and a multivariable case satisfying certain function inequalities are given in Section~\ref{sec:constructing-lyapunov-functions:maximum}, \ref{sec:constructing-lyapunov-functions:scalar-maps}, and \ref{sec:constructing-lyapunov-functions:integral-transforms}. \textcolor{\addedcolor}{As these constructions are linked to properties of the adaptive gradient interconnected systems, they are often more readily expressible than constructions for generic interconnected systems found in previous literature. Therefore, the constructions we provided in this work likely give Lyapunov functions which are better suited for further system analysis.}

\section*{References}
\bibliographystyle{IEEEtranS}
\bibliography{references}

\end{document}